\newcommand{\rk}{{\rm rk}}
\newtheorem{theorem}{Theorem}[section]
\newtheorem{proposition}[theorem]{Proposition}
\newtheorem{lemma}[theorem]{Lemma}
\newtheorem{rem}[theorem]{Remark}
\newtheorem{example}[theorem]{Example}
\newtheorem{definition}[theorem]{Definition}
\newtheorem{assumption}[theorem]{Assumption}
\newcommand{\sU}{{\mathcal U}}
\newcommand\bC{{\mathbb C}}
\newcommand\bP{{\mathbb P}}
\newcommand\bR{{\mathbb R}}
\newcommand\bZ{{\mathbb Z}}
\DeclareMathOperator*{\Null}{\rm null}
\DeclareMathOperator*{\sign}{\rm sign}
\DeclareMathOperator*{\codim}{\rm codim}
\newcommand{\Xsing}{X_{\rm sing}}
\newcommand{\Xsmooth}{X_{\rm reg}}
\begin{document}
\title{Smooth connectivity in real algebraic varieties}

\author{
Joseph Cummings \thanks{Department of Applied and Computational Mathematics
and Statistics, University of Notre Dame, Notre Dame, IN 46556 (jcummin7@nd.edu)}
\and
Jonathan D. Hauenstein \thanks{Department of Applied and Computational Mathematics
and Statistics, University of Notre Dame, Notre Dame, IN 46556 (hauenstein@nd.edu, \url{https://www.nd.edu/\~jhauenst})}
\and Hoon Hong\thanks{Department of Mathematics,
North Carolina State University, Raleigh, NC 27695 (hong@ncsu.edu, \url{https://hong.math.ncsu.edu/})}
\and Clifford D. Smyth\thanks{Department of Mathematics and Statistics, University of North Carolina
at Greensboro, Greensboro, NC 27402 (cdsmyth@uncg.edu,
\url{https://sites.google.com/view/cliffordsmyth/})}
}

\date{\today}

\maketitle

\begin{abstract}
\noindent
A standard question in real algebraic geometry is to compute 
the number of connected components of a real algebraic variety in affine space.
By adapting an approach for determining connectivity in complements
of real hypersurfaces by Hong, Rohal, Safey El Din, and Schost,
algorithms are presented for computing the number of connected components,
the Euler characteristic, and deciding the connectivity between two points
for a smooth manifold arising as the complement of a real hypersurface 
of a real algebraic variety.
When taking such real hypersurface to be the set of singular points,
this yields an approach for determining smooth connectivity
in a real algebraic variety.  The method is based upon 
gradient ascent/descent paths on the real algebraic variety
and several examples are included to demonstrate~the~approach.

\noindent {\bf Keywords}.
Connectivity, smooth points, real algebraic sets, polynomial systems, homotopy continuation,
numerical algebraic geometry

\end{abstract}

\section{Introduction}

In real affine space $\bR^n$, a real algebraic variety has the form
\begin{equation}\label{eq:X}
X = V_\bR(g_1,\dots,g_k) = \{x\in\bR^n~|~g_1(x) = \cdots = g_k(x) = 0\}
\end{equation}
where $g_1,\dots,g_k\in\bR[x_1,\dots,x_n]$, that is, polynomials in $x=(x_1,\dots,x_n)$
with real coefficients.  Many problems in science and engineering can
be translated into questions regarding real algebraic varieties.
For example, the real algebraic variety $X$ could describe the configuration space of 
a mechanism and path planning (e.g., see \cite{BabyGiant,Canny88,OutputModeSwitching})
corresponds with determining connected
paths between two points on $X$.  Moreover, singularity-free path planning 
(e.g., see \cite{SingularityFree1,SingularityFree2})
corresponds with determining smooth connections between two points on $X$.  
Due to its ubiquity, there are many algorithms proposed 
for deciding connectivity with a non-exhaustive list being \cite{RoadmapsVariety,DivideConquer,CCsemi,Canny93,CountingSemi,SingleI,SingleII,SSH87}.

The approach in \cite{HongConnectivity,ConnectivitySemiAlg} considers 
connectivity in $\bR^n\setminus V_\bR(f)$, where $f\in\bR[x_1,\dots,x_n]$,
using connections between critical points via gradient ascent paths.  
The algorithms described below are based on this work, but generalized to consider
$X_f = X\setminus V_\bR(f)$ which is assumed to be smooth.  That is, 
$X\cap V_\bR(f)$ is assumed to at least contain the singular points of $X$.  
Thus, connected components of $X_f$ are smoothly connected in $X$.
For example, Figure~\ref{fig:Illustrative}(a) shows that the pair of intersecting
lines $V_\bR(x_1^2-x_2^2)$ has four smoothly connected components using $f = 4(x_1^2+x_2^2)$ 
while Figure~\ref{fig:Illustrative}(b) shows that the Whitney umbrella
$V_\bR(x_1^2-x_2^2x_3)$ has two smoothly connected components using $f = 4x_1^2+4x_2^2x_3^2+x_2^4$.
Additionally, given a point in $X_f$, one can decide which smoothly connected component
the point belongs to yielding an approach to decide if two
points lie on the same smoothly connected component.  

\begin{figure}
    \centering
    \begin{tabular}{ccc}
    \includegraphics[scale=0.25]{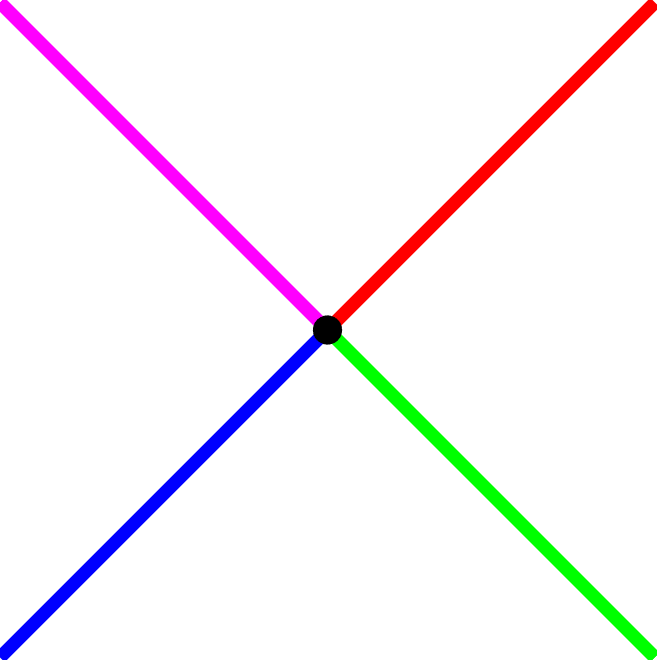} & ~~~~~ &\includegraphics[scale=0.3]{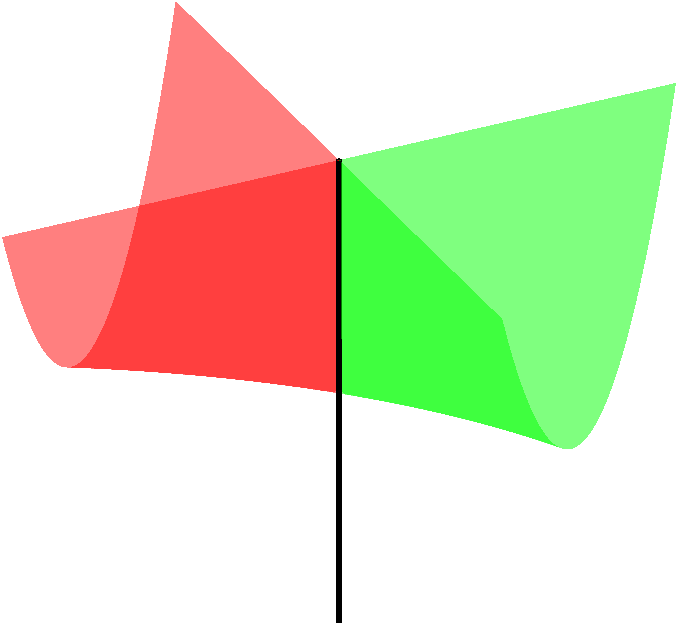} \\
    (a) & & (b) 
    \end{tabular}
    \caption{(a) Four smoothly connected components for a pair of intersecting lines with the singular point at the intersection.  (b) Two smoothly connected components for the Whitney umbrella with the singular points forming the ``handle'' of the umbrella.}
    \label{fig:Illustrative}
\end{figure}

The rest of the paper is organized as follows.  Section~\ref{sec:Preliminaries}
provides a summary of preliminary topics.
Section~\ref{sec:Routing} describes routing functions which
are the basis for the algorithms in Section~\ref{sec:Algorithms}.
Section~\ref{sec:Proofs} proves the correctness of the algorithms
while Section~\ref{sec:Examples} demonstrates the algorithms on some examples.
A short conclusion is provided in Section~\ref{sec:Conclusion}.

\section{Preliminaries}\label{sec:Preliminaries}

The following summarizes some background information that will be used
throughout.

\subsection{Smooth points}\label{sec:SmoothPoints}

The following provides a short summary of smooth points
with more details found in, e.g., \cite{BasuPollackRoyBook}.
For~$X$ as in \eqref{eq:X}, the dimension of $X$, denoted $\dim X$, 
is the largest $d$ such that $(0,1)^d$ injects by a semi-algebraic map into $X$.  
If $\dim X = d$, a point $x\in X$ is a smooth point of $X$, if 
there is an open neighborhood of~$x$ in $X$ 
which is a $d$-dimensional submanifold, i.e., 
the tangent space of $x$ with respect to $X$, denoted $T_x X$, is $d$-dimensional.
A point $x\in X$ is a singular point of $X$ if it is not a smooth point.
Let $\Xsmooth$ and $\Xsing$ be the set of smooth and singular points of~$X$, respectively.
Hence, $X = \Xsmooth \cup \Xsing$ and $\Xsmooth \cap \Xsing = \emptyset$, i.e.,
$\Xsmooth = X\setminus\Xsing$.

\begin{example}\label{ex:LinesWhitney}
For $X=V_\bR(x_1^2-x_2^2)$ as in Figure~\ref{fig:Illustrative}(a), $\dim X = 1$
and $\Xsing = V_\bR(x_1,x_2) = \{(0,0)\}$.
For $X=V_\bR(x_1^2-x_2^2x_3)$ as in Figure~\ref{fig:Illustrative}(b), $\dim X = 2$ and
\mbox{$\Xsing = V_\bR(x_1,x_2) = \{(0,0,x_3)~|~x_3\in\bR\}$}, called the ``handle''
of the Whitney umbrella.
\end{example}

The following will be assumed throughout which relates the 
smooth points with respect to $X$ with the null space of the
Jacobian matrix of the defining polynomials $g_1,\dots,g_k$.  

\begin{assumption}\label{assume:NullSpace}
For $X$ as in \eqref{eq:X}, let $g = \{g_1,\dots,g_k\}$ 
and $Jg(x)$ be the $k\times n$ Jacobian matrix of~$g$ at $x$.  
Then, it is assumed that the system $g$ is such that the following holds:
$$\Xsmooth = \{x\in X~|~\dim \Null Jg(x) = \dim X\}.$$
For $m=n-\dim X$, let $M_m$ be the set of $m\times m$ minors of $Jg(x)$
and $S = \sum_{p\in M_m} p^2$.  
Therefore,
$$\Xsing = X \cap V_\bR(M_m) = X\cap V_\bR(S) = V_\bR(g_1,\dots,g_k,S)
\hbox{~~and~~} \Xsmooth = X\setminus V_\bR(S).$$
\end{assumption}

For example, Assumption~\ref{assume:NullSpace} can always be satisfied
by replacing $g_1,\dots,g_k$ with a generating set for the real radical, e.g., see \cite{RealRadicals}, of 
the ideal $\langle g_1,\dots,g_k\rangle$.

\begin{example}\label{ex:LinesWhitney2}
For $X=V_\bR(x_1^2-x_2^2)$, $g=x_1^2-x_2^2$ satisfies Assumption~\ref{assume:NullSpace}
with $n = 2$ and \mbox{$\dim X = 1$}.  Moreover, $M_1 = \{2x_1,-2x_2\}$ with $S = 4(x_1^2+x_2^2)$ 
yields
$$\Xsing = X\cap V_\bR(M_1) = X\cap V_\bR(S) = V_\bR(x_1,x_2).$$
For $X=V_\bR(x_1^2-x_2^2x_3)$, $g=x_1^2-x_2^2x_3$ satisfies Assumption~\ref{assume:NullSpace}
with $n = 3$ and $\dim X = 2$.  Moreover, $M_1 = \{2x_1,-2x_2x_3,-x_2^2\}$ with 
$S = 4x_1^2 + 4x_2^2x_3^2 + x_2^4$ yields
$$\Xsing = X\cap V_\bR(M_1) = X\cap V_\bR(S) = V_\bR(x_1,x_2).$$ 
In particular, $\{x_1,x_2\}$ is a generating set for the real radical of $\langle g, S\rangle$ in both of these cases.
\end{example}

A semi-algebraic connected set $C\subset X$ is said to be smoothly connected
if $C\subset\Xsmooth$.  Moreover, the smoothly connected components of $X$
are the connected components of $\Xsmooth$.

\begin{example}\label{ex:LinesWhitney3}
For $X = V_\bR(x_1^2-x_2^2)$, the smoothly connected components are
$$C_1 = \{(t,t)~|~t>0)\},\,
C_2 = \{(t,-t)~|~t>0\},\,
C_3 = \{(-t,t)~|~t>0\},\hbox{and\,\,}
C_4 = \{(-t,-t)~|~t>0\}
$$
as illustrated in Figure~\ref{fig:Illustrative}(a).
For $X=V_\bR(x_1^2-x_2^2x_3)$, the smoothly connected components are
$$C_1 = \{(uv,u,v^2)~|~u>0, v\in\bR\}\hbox{\,and\,\,}
C_2 = \{(uv,u,v^2)~|~u<0, v\in\bR\}$$
as illustrated in Figure~\ref{fig:Illustrative}(b).
\end{example}

\subsection{Gradient system}\label{sec:Gradients}

Let $X$ be as in \eqref{eq:X} which satisfies Assumption~\ref{assume:NullSpace}.
Suppose that $a,b\in\bR[x_1,\dots,x_n]$ such that $X\cap V_\bR(b) = \emptyset$
and $f = a/b$.  Hence, $f$ is a rational function defined everywhere on $X$
with $V_\bR(f) = V_\bR(a)$.  
Moreover, suppose that 
\begin{equation}\label{eq:GradientAssumption}
X\setminus V_\bR(f)\subset \Xsmooth \hbox{~~and, equivalently,~~}
\Xsing\subset X\cap V_\bR(f).
\end{equation}
Let $X_f = X\setminus V_\bR(f)\subset \Xsmooth$
Since $X_f$ is a manifold,
the gradient of $f$ on $X_f$, denoted $\nabla_{X_f} f$, is well-defined.
Note that $f\neq0$ on $X_f$ so the connected components of $X_f$
are the union of the connected components of $X_f\cap\{f>0\}$
and $X_f\cap\{f<0\}$.  Thus, one can perform gradient ascent on $X_f$ 
when starting at a point with $f>0$
and gradient descent on $X_f$ when starting at a point with $f<0$
and remain in $X_f$.
In particular, suppose that $x_0\in X_f$ 
and $\sigma_0 = \sign f(x_0) \in \{-1,+1\}$, then the gradient system under consideration
is formally written~as
\begin{equation}\label{eq:GradientSystem}
    \begin{array}{rcll}
    \dot{x}(t) &=& \sigma_0\cdot\nabla_{X_f} f(x) & \hbox{~~on~$X_f$} \\
    x(0) &=& x_0.
    \end{array}
\end{equation}
Of course, gradient systems on manifolds are well-studied, e.g., \cite{GradientProjection,Yang07}.
Computationally, one can consider \eqref{eq:GradientSystem} using a local tangential parameterization.
Suppose that $d=\dim X$ and $x\in X_f$.  
Let $V_x\in\bR^{n\times d}$ be an orthogonal matrix such that its columns form 
an orthonormal basis for $T_x X_f$.
Hence, $Jg(x) V_x = 0$ and $V_x^T V_x = I_d$, the $d\times d$ identity matrix.  
Let $\pi_x:X_f\rightarrow\bR^d$ such that $\pi_x(y) = V_x^T(y-x)$
is the orthogonal projection from~$X_f$ to $T_x X_f$ centered at $x$ (see Figure~\ref{fig:tangential_param}), 
i.e., $\pi_x(x) = 0$.
Since $x\in X_f\subset\Xsmooth$, there exists $\epsilon_x>0$
such that $\pi_x$ restricted to $X_f\cap B_{\epsilon_x}(x)$ is invertible,
where 
$$B_{\epsilon_x}(x) = \{y\in\bR^n~|~\|y-x\|<\epsilon_x\}
\hbox{~~with~~} \|y-x\| = \sqrt{(y_1-x_1)^2+\cdots+(y_n-x_n)^2}.
$$
Since $U_x = \pi_x(X_f\cap B_{\epsilon_x}(x))\subset\bR^d$ is an open neighborhood of the origin,
\eqref{eq:GradientSystem} can be considered locally in parameterizing coordinates $p\in U_x$
with $y(p) = \pi^{-1}(p)\in X_f\cap B_{\epsilon_x}(x)$.  In particular,
\begin{equation}\label{eq:Gradient}
\nabla_{X_f} f(x) = \nabla_{\bR^n} f(x)\cdot V_x \in \bR^d.
\end{equation}
Moreover, the corresponding Hessian matrix is
$$H_{X_f}f(x) = \sum_{i=1}^n \frac{\partial f}{\partial x_i}(x)\cdot W_x^i + 
V_x^T\cdot H_{\bR^n} f(x)\cdot V_x \in \bR^{d\times d}$$
where $W_x^1,\dots,W_x^n\in\bR^{d\times d}$ are symmetric and satisfy the well-constrained linear system
$$
\begin{array}{rcll}
\displaystyle\sum_{i=1}^n \frac{\partial g_j}{\partial x_i}(x)\cdot W_x^i + V_x^T\cdot H_{\bR^n}g_j(x)\cdot V_x &=& 0 & \hbox{for~} j=1,\dots,k, \\
\displaystyle\sum_{i=1}^n (V_x)_{ij}\cdot W_x^i &=& 0 &\hbox{for~} j = 1,\dots,d.
\end{array}
$$
In particular,
\begin{equation}\label{eq:ypExpansion}
y(p) = x + V_x\cdot p + \frac{1}{2}\left[\begin{array}{c} 
p^T\cdot W_x^1\cdot p \\ \vdots \\ p^T\cdot W_x^n\cdot p \end{array}\right] + \hbox{higher order terms}
\end{equation}
such that $g(y(p)) = 0$ since $y(p)\in X_f$.

\begin{figure}[!t]
    \centering
    \includegraphics[scale = 0.2]{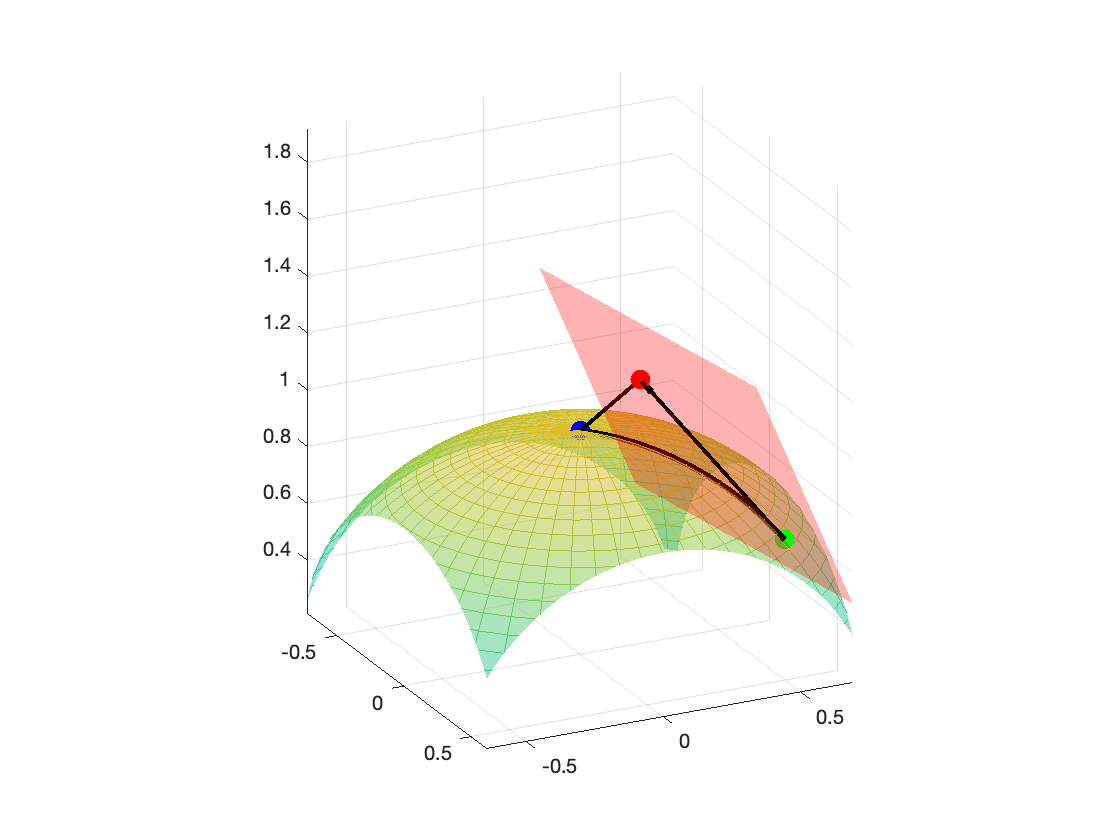}
    \caption{An example of a tangential parameterization of $X$ centered at $x$. The green point is $x$, the red point is $p = x + \mu v$ for $v \in T_x X$, and the blue point is $y = \pi_x^{-1}(p)$.}
    \label{fig:tangential_param}
\end{figure}

\begin{example}
For $X=V_\bR(x_1^2-x_2^2)$ and $f=4(x_1^2+x_2^2)$, let $x=(1,1)$ so that
\begin{itemize}
\item $V_x = \frac{1}{\sqrt{2}}\left[\begin{array}{c} 1 \\ 1 \end{array}\right]$, $W_x^1 = [0]$, $W_x^2 = [0]$;
\item $\nabla_{X_f} f(x) = \nabla_{\bR^2} f(x)\cdot V_x = 8\sqrt{2}$;
\item $H_{X_f} f(x) = \frac{\partial f}{\partial x_1}(x)\cdot W_x^1 + \frac{\partial f}{\partial x_2}(x)\cdot W_x^2 + V_x^T\cdot H_{\bR^2} f(x)\cdot V_x = 8$.
\end{itemize}
Since $X$ is locally linear at $x$, $y(p) = x + V_x\cdot p$
for $p\in\bR$.  Hence, for $q(p) = f(y(p))$,
one has $\nabla_{\bR} q(0) = \nabla_{X_f} f(1,1)$
and $H_{\bR} q(0) = H_{X_f} f(1,1)$.

\medskip
Similarly, for $X = V_\bR(x_1^2-x_2^2x_3)$ and $f=4x_1^2 + 4x_2^2x_3^2 + x_2^4$, let $x=(1,1,1)$ so that
\begin{itemize}
\item $V_x = \frac{1}{\sqrt{2}}\left[\begin{array}{rr} 1 & 1/3\\ 1 & -1/3 \\ 0 & 4/3 \end{array}\right]$,
$W_x^1 = \left[\begin{array}{cc} 0 & 4/27 \\ 4/27 & -16/81\end{array}\right]$,
$W_x^2 = -W_x^1$,
$W_x^3 = -W_x^1/2$;
\item $\nabla_{X_f} f(x) = \nabla_{\bR^3} f(x)\cdot V_x = \frac{2\sqrt{2}}{3}\left[\begin{array}{cc} 15 & 7\end{array}\right]$;
\item $
H_{X_f} f(x) = 
\frac{\partial f}{\partial x_1}(x)\cdot W_x^1 + 
\frac{\partial f}{\partial x_2}(x)\cdot W_x^2 + 
\frac{\partial f}{\partial x_3}(x)\cdot W_x^3 + 
V_x^T\cdot H_{\bR^3} f(x)\cdot V_x = \frac{2}{81}\left[\begin{array}{cc}
567 & 303 \\ 303 & 127 \end{array}\right]$.
\end{itemize}
For $y(p)$ as in \eqref{eq:ypExpansion}
and $q(p) = f(y(p))$, then 
$\nabla_{\bR^2} q(0) = \nabla_{X_f} f(1,1,1)$
and $H_{\bR^2} q(0) = H_{X_f} f(1,1,1)$.
\end{example}

\section{Routing points and routing functions}\label{sec:Routing}

The keys to the algorithms in \cite{HongConnectivity,ConnectivitySemiAlg} 
are routing points and routing functions.  Suppose that $X$
as in \eqref{eq:X} satisfies Assumption~\ref{assume:NullSpace}
and let $r:X\rightarrow\bR$ be a twice continuously differentiable function 
on $X$ such that $r(x) = 0$ for all $x\in\Xsing$, i.e., 
$X_r = X\setminus V_\bR(r) \subset \Xsmooth$.
A point $z\in X$ is called a routing point of $r$ on $X$ 
if $z\in X_r$, i.e., $r(z)\neq0$, and $\nabla_{X_r} r(z) = 0$,
which is equivalent to 
\begin{equation}\label{eq:CriticalPoint}
\dim \Null \left[\begin{array}{cccc}
\nabla_{\bR^n} r(z)^T & \nabla_{\bR^n} g_1(z)^T & \cdots & \nabla_{\bR^n} g_k(z)^T
\end{array}\right] = \dim X.
\end{equation}
One can formulate \eqref{eq:CriticalPoint} using \cite{RankDef}
which, when $k=n-\dim X$, is equivalent to 
using Lagrange multipliers.  
Moreover, a routing point $z$ is nondegenerate if $H_{X_r} r(z)$ is invertible.
Since the gradient system~\eqref{eq:GradientSystem} depends upon
the sign, the index of a nondegenerate routing point is also sign dependent.
In particular, the index of a nondegenerate routing point $z$ 
is the number of eigenvalues of $H_{X_r} r(z)$ of the same sign as $r(z)$.  Eigenvectors of $H_{X_r}r(z)$ corresponding
with the eigenvalues of the same sign as $r(z)$ are
called unstable eigenvector directions.  
For example, if $r(z)>0$, then, since \eqref{eq:GradientSystem} 
is aiming to increase the function value, the index is the dimension of the 
local ascending manifold at~$z$ which is the number of positive eigenvalues of $H_{X_r} r(z)$
and the eigenvectors corresponding with a positive eigenvalue
are the unstable eigenvector directions.

\begin{definition}
    The function $r$ is called a routing function on $X$ if the following conditions hold:
    \begin{enumerate}
            \item $X_r = X\setminus V_\bR(r) \subset \Xsmooth$,
        \item for all $\epsilon>0$, there exists $\delta>0$ such that if $x\in X$
        with $\|x\|\geq \delta$, then $|r(x)|\leq \epsilon$,
        \item the corresponding set of routing points on $X$ is finite and each is nondegenerate, 
        \item for each $\alpha\in\bR\setminus\{0\}$, there
        is at most one routing point $x$ on $X$ 
        satisfying $r(x)=\alpha$, and
        \item the norms of $r$, $\nabla_{X_r} r$, and $H_{X_r} r$ are bounded on $X_r$.
    \end{enumerate}
\end{definition}

In particular, a routing function vanishes on $\Xsing$ as well as at infinity, and each level curve
contains at most one routing point.
Therefore, if $C$ is a connected component of $X_r$, then $r$ on $C$ must obtain either a minimum
(if $r<0$ on $C$) or a maximum (if $r>0$ on $C$), which must occur at a routing point.  
The following formalizes this.

\begin{proposition}\label{Prop:ConnectedCompRoutingPoints}
With the assumptions and definitions above, there is at least one routing point
in each connected component of $X_r$ of index $0$.
\end{proposition}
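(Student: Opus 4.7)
The plan is to exhibit a routing point inside each connected component $C$ of $X_r$ by locating an interior extremum of $r$ on $C$, and then to read off that its index is $0$ from second-order information. Since $r$ is continuous on $X$ and nowhere zero on $X_r$, it has constant sign on the connected set $C$; I will treat the case $r>0$ on $C$ and handle $r<0$ symmetrically (by passing to $-r$, or, equivalently, by replacing supremum with infimum).

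The first step is to produce a candidate maximizer. Condition (5) gives $M:=\sup_{x\in C} r(x)<\infty$, and since $r$ is positive somewhere on $C$ we have $M>0$. Take a maximizing sequence $\{x_n\}\subset C$ with $r(x_n)\to M$. I would rule out two modes of loss at the limit. First, if a subsequence satisfies $\|x_n\|\to\infty$, then condition (2) forces $r(x_n)\to 0$, contradicting $M>0$; so the sequence is bounded. Second, after passing to a convergent subsequence $x_n\to x^*\in\bR^n$, continuity of $g$ puts $x^*\in X$ and continuity of $r$ gives $r(x^*)=M>0$. In particular $x^*\notin V_\bR(r)$, so $x^*\in X_r$. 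Because $X_r\subset \Xsmooth$ is an open submanifold of $X$, each of its connected components is both open and closed in $X_r$, so the limit of points of $C$ that lies in $X_r$ must in fact lie in $C$. Thus the supremum is attained at an interior point $x^*\in C$.

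The second step is to certify $x^*$ as a routing point of index $0$. Being an interior extremum of the smooth function $r$ on the manifold $C$, the point $x^*$ satisfies $\nabla_{X_r} r(x^*)=0$, and since $r(x^*)=M\neq 0$ this makes $x^*$ a routing point in the sense of the excerpt. By condition (3), $x^*$ is nondegenerate, so $H_{X_r}r(x^*)$ is invertible. Being a local maximum forces $H_{X_r}r(x^*)$ to be negative semidefinite, and nondegeneracy then upgrades this to negative definite. Since $r(x^*)>0$, the number of eigenvalues of $H_{X_r}r(x^*)$ sharing the sign of $r(x^*)$ is zero, which is exactly the definition of index~$0$ given before the statement.

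The main obstacle I anticipate is the containment $x^*\in C$: one must simultaneously rule out escape to infinity along the maximizing sequence and accumulation onto $X\setminus X_r$ (in particular onto $\Xsing$). The decay-at-infinity condition (2) and the level-set relation $M>0$ together with openness of components in $X_r$ are precisely tailored to handle these two failure modes, and the rest of the argument is standard calculus on manifolds applied to the data provided by the routing-function axioms.
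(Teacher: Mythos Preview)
Your argument is correct and follows exactly the idea the paper sketches in the sentence immediately preceding the proposition: on each connected component $C$ the sign of $r$ is fixed, the decay condition forces a maximizing (or minimizing) sequence to stay bounded, positivity of the limit value keeps the limit inside $X_r$, and nondegeneracy at an interior extremum yields index~$0$. The paper does not supply a formal proof beyond that sentence, so your write-up is effectively a faithful and complete fleshing-out of the same approach.
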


\begin{example}\label{ex:LinesRouting}
For $X=V_\bR(x_1^2-x_2^2)$, the function $f(x) = 4(x_1^2 + x_2^2)$ is not a routing function
on~$X$ since $f$ is unbounded on $X$.  Nonetheless, consider the
following rational function related to~$f$:
\begin{equation}\label{eq:RinExample}
r(x) = \frac{4(x_1^2+x_2^2)}{((x_1-1/2)^2+(x_2-1/3)^2 + 1)^2},
\end{equation}
so that $X_r = X_f$.  Hence, $r$ is a routing function with four routing points $(\pm 7/(6\sqrt{2}),\pm 7/(6\sqrt{2}))$.
Proposition~\ref{Prop:ConnectedCompRoutingPoints} holds with exactly one point in
each connected component of $X_r$ as illustrated in Figure~\ref{fig:LinesRouting}.
It is easy to verify that $r$ takes different values
at each of the routing points, which corresponds
with a local maximum of $r$ along $X_r$, 
i.e., each has index $0$.

\begin{figure}[!ht]
    \centering
    \includegraphics[scale=0.2]{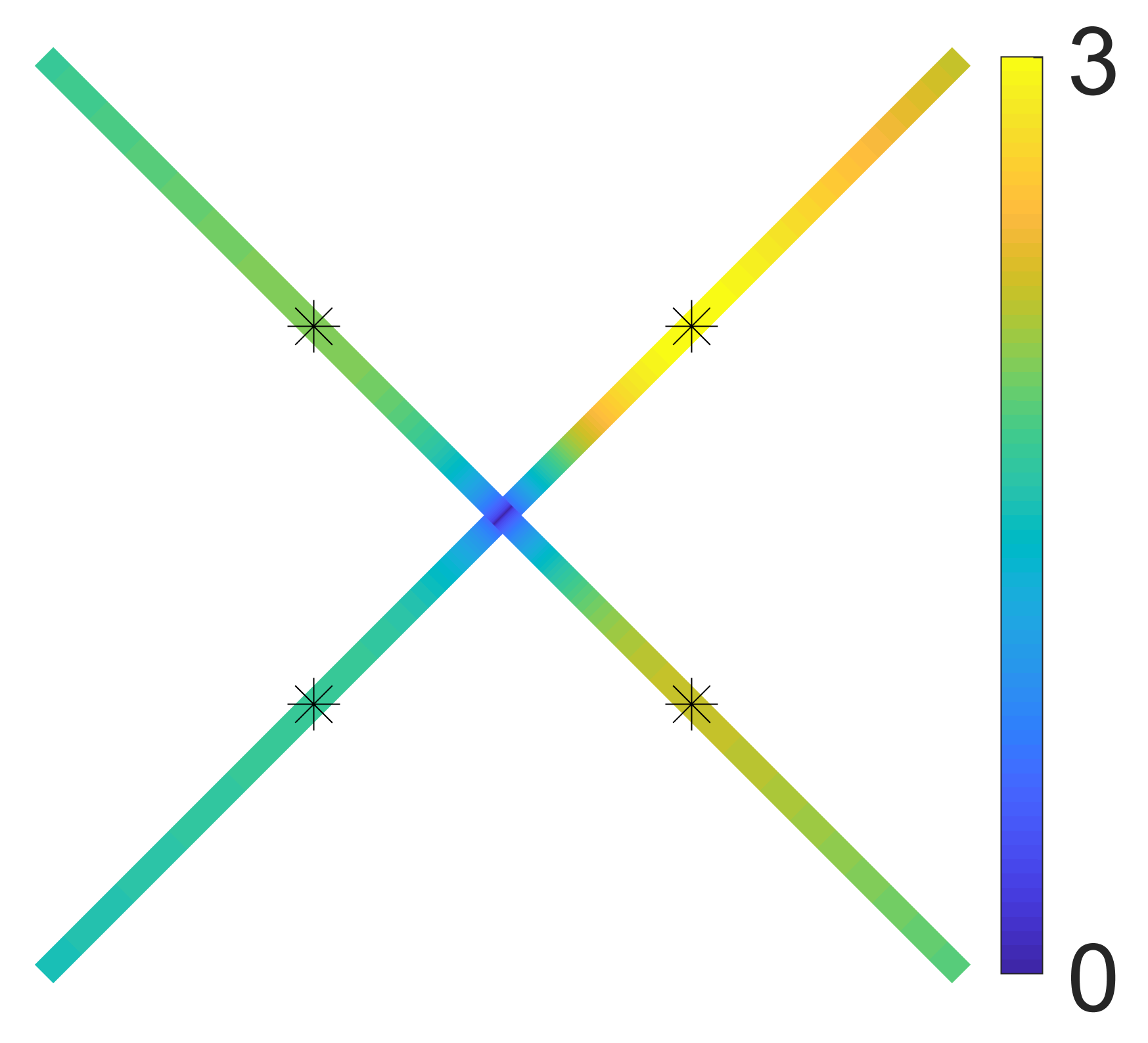}
    \caption{Pair of intersecting lines colored based on the value of the routing function 
    with the four routing points marked, each corresponding to a local maxima of the routing function.}
    \label{fig:LinesRouting}
\end{figure}
\end{example}

The following provides a generalization of the 
construction used in Example~\ref{ex:LinesRouting} 
to create a routing function derived from \cite{HongConnectivity,ConnectivitySemiAlg}.

\begin{theorem}\label{thm:RoutingFunction}
Suppose that $X$ as in \eqref{eq:X}
satisfies Assumption~\ref{assume:NullSpace} 
and $f\in\bR[x_1,\dots,x_n]$ such that 
$X_f = X\setminus V_\bR(f)\subset\Xsmooth$.  
Let $\ell\in\bZ_{>0}$ such that $2\ell > \deg f$.
Then, there is a Zariski open dense subset $\sU\subset\bR^n$
such that, for every $c\in\sU$,
$$r_c(x) = \frac{f(x)}{((x_1-c_1)^2+\cdots+(x_n-c_n)^2+1)^\ell}$$
is a routing function on $X$.
\end{theorem}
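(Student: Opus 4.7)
The plan is to verify each of the five properties defining a routing function. Properties (1), (2), and (5) will hold for every $c\in\bR^n$, while (3) and (4) require $c$ to lie in a Zariski open dense subset, and we take $\sU$ to be their intersection.

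For (1), the denominator $d_c(x) := (x_1-c_1)^2+\cdots+(x_n-c_n)^2+1$ satisfies $d_c\ge 1$ everywhere, so $r_c$ is a well-defined smooth rational function on $\bR^n$ with $V_\bR(r_c)=V_\bR(f)$, giving $X_{r_c}=X_f\subset\Xsmooth$ by hypothesis. For (2), the bound $2\ell>\deg f$ forces $|r_c(x)|=O(\|x\|^{\deg f-2\ell})\to 0$ as $\|x\|\to\infty$. For (5), differentiating $r_c$ in the ambient space only increases the power of $d_c$ in the denominator, so $\nabla_{\bR^n}r_c$ and $H_{\bR^n}r_c$ also decay at infinity and are bounded on $\bR^n$. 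Since $\nabla_{X_f}r_c=V_x^T\nabla_{\bR^n}r_c$ is an orthogonal projection, its norm is bounded by the ambient gradient, and the explicit formula for $H_{X_f}r_c$ combined with the smoothness of $X_f$ yields the Hessian bound.

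For (3) and (4), I would use a parametric transversality argument. Introducing Lagrange multipliers and reducing to the case $k=n-\dim X$ via \cite{RankDef}, a routing point $z\in X_f$ of $r_c$ corresponds to $(z,\lambda)\in\bR^n\times\bR^k$ satisfying the polynomial system
\begin{equation*}
g(z)=0, \qquad d_c(z)\,\nabla f(z)-2\ell\,f(z)(z-c)-\sum_{i=1}^k\lambda_i\,\nabla g_i(z)=0.
\end{equation*}
Let $\sI\subset\bR^n_z\times\bR^k_\lambda\times\bR^n_c$ be the resulting real algebraic set and $\pi:\sI\to\bR^n_c$ the projection to $c$. The $n+k$ equations on a space of dimension $2n+k$ give $\dim\sI\le n$, matching the dimension of the target. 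The semi-algebraic Sard theorem (see \cite{BasuPollackRoyBook}) then implies that the critical values of $\pi$ lie in a proper real algebraic subvariety of $\bR^n$, so its complement $\sU_3$ is Zariski open dense; above $\sU_3$ every fiber of $\pi$ is finite and every preimage is regular, which translates into nondegeneracy of $H_{X_{r_c}}r_c$ at each routing point. For (4), the coincidence locus of those $c\in\sU_3$ for which two distinct routing points share a value of $r_c$ is likewise cut out by a proper real algebraic condition (using that the critical values vary algebraically with $c$), so its complement $\sU_4$ is Zariski open dense, and we set $\sU:=\sU_3\cap\sU_4$.

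The main technical obstacle is the transversality step underlying (3): since $r_c$ varies only through its denominator, one must verify that the $n$-parameter family $\{r_c\}_{c\in\bR^n}$ is rich enough to realize Sard-type genericity on $\sI$. Concretely this reduces to checking that the Jacobian of the Lagrange multiplier system with respect to $c$ has full row rank at a general point of $\sI$, i.e., a nondegeneracy property of $f(z)(z-c)$ as $c$ varies. Once this rank condition is established, the real algebraic structure of $\sI$ combined with Sard yields $\sU_3$, and the remaining conditions then follow from the arguments above.
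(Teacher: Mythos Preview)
Your proposal is correct and follows essentially the same approach as the paper's proof. Both verify properties (1), (2), and (5) directly from the degree condition $2\ell>\deg f$ and the orthonormality of the tangent frame $V_x$, and both obtain (3) and (4) by an algebraic Sard argument applied to the family $\{r_c\}_{c\in\bR^n}$. The paper formulates the critical-point condition as a rank-deficiency condition on the augmented matrix $\bigl[\tfrac{1}{2\ell}\nabla f(z)/f(z)-(z-c)/D_c(z)\ \big|\ Jg(z)^T\bigr]$ and then observes that varying $c$ moves the first column freely enough to invoke Sard, whereas you phrase the same content as an incidence variety in $(z,\lambda,c)$-space with a fiber-dimension count; these are equivalent setups. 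Your flagged ``technical obstacle'' (full rank of the $c$-derivative of the Lagrange system) is precisely the point the paper dispatches informally by noting that the $c$-dependent column $(z-c)/D_c(z)$ can be perturbed independently of the fixed columns $\nabla g_i(z)$, so neither argument is substantially more detailed than the other on this step.
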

\begin{proof}
Let $c\in\bR^n$ and define $D_c(x) = (x_1-c_1)^2+\cdots+(x_n-c_n)^2+1$.  Thus, $r_c(x) = f(x)\cdot D_c(x)^{-\ell}$.
Since $D_c$ does not vanish on $\bR^n$,
$r_c$ is infinitely differentiable
with $X_{r_c} = X_f \subset \Xsmooth$.
Moreover, $2\ell > \deg f$
ensures that $r_c$ is bounded on $\bR^n$ (and hence on $X$)
and vanishes at infinity.  
Additionally, 
\begin{equation}\label{eq:GradientR}
\nabla_{\bR^n} r_c(x) = \nabla_{\bR^n} f(x)\cdot D_c(x)^{-\ell}
-2\cdot \ell\cdot f(x)\cdot D_c(x)^{-\ell-1}\cdot (x-c)
\end{equation}
is also bounded on $\bR^n$ for the same reason.
Since $V_x$ in \eqref{eq:Gradient} is orthonormal,
this shows that $\nabla_{X_{r_c}} r_c$ is bounded on $X$.
A similar argument shows that $H_{X_{r_c}} r_c$ is also
bounded on $X_{r_c}$ since the matrices~$W_x^j$ 
describe the local curvature of $X_f \subset \Xsmooth$ and $V_x$ is orthonormal.

All that remains 
is showing that the set of routing points is finite,
each is nondegenerate,
and evaluate to different values of $r_c$
for values $c$ in 
a Zariski open dense subset of $\bR^n$.
Since $f$ and $D_c$ are nonzero on $X_{r_c}$, 
\eqref{eq:CriticalPoint} can be
equivalently formulated as
$$
\dim \Null
\left[
\begin{array}{cccc}
\frac{1}{2\cdot\ell}\cdot 
\frac{\nabla_{\bR^n} f(z)^T}{f(z)} - \frac{(z - c)}{D_c(z)}
& \nabla_{\bR^n} g_1(z)^T & \cdots & 
\nabla_{\bR^n} g_k(z)^T
\end{array}
\right] = \dim X.
$$
Since the last $k$ columns has a null space equal
to $\dim X$ on $X_{r_c}=X_f\subset\Xsmooth$ via Assumption~\ref{assume:NullSpace}, 
this shows that adjusting the value of $c$
will change both the location 
and value of $r$ of routing points $z$
to ensure that the first column
is contained in the span of the last $k$ columns.
Therefore, the result follows from 
an algebraic version of Sard's theorem, e.g., see \cite[Thm.~A.4.10]{SW05}.
\end{proof}

Theorem~\ref{thm:RoutingFunction} shows that 
one can obtain a routing function using a generic $c\in\bR^n$.

\begin{example}\label{ex:Circle}
To demonstrate a value of $c$ that does not work,
consider $X = V_\bR(x_1^2+x_2^2-1)$ with $f=4(x_1^2+x_2^2)$.
For $\ell = 2$, consider $c=0$ so that $r_0(x)$
is as in \eqref{eq:RinExample}.  With this, every point 
on $X$ is a routing point so that $r_0$ is not a routing 
function on $X$.  For, say $c=(1/2,1/3)$, then 
$$r_c(x) = \frac{4(x_1^2+x_2^2)}{((x_1-1/2)^2+(x_2-1/3)^2+1)^2}$$
is a routing function with two routing
points: $(3/\sqrt{13}, 2/\sqrt{13})$
is a maximum and its antipodal point
$(-3/\sqrt{13}, -2/\sqrt{13})$ is a minimum.  
\end{example}

\begin{example}\label{ex:WhitneyRouting}
For $X=V_\bR(x_1^2-x_2^2x_3)$, consider
$f(x) = 4x_1^2+4x_2^2x_3^2+x_2^4$
from Example~\ref{ex:LinesWhitney2}.
Consider $c\in\bR^3$ and $\ell = 3$ so that
$$r_c(x) = \frac{4x_1^2+4x_2^2x_3^2+x_2^4}{((x_1-c_1)^2+(x_2-c_2)^2+(x_3-c_3)^2+1)^3}.$$
When $c=0$, then $r_0$ is not a routing function
since the two routing points $(0,\pm \sqrt{2},0)$
are degenerate, i.e., $H_{X_r} r$ is not invertible
at these two points.  
With, say $c=(1/2,1/3,1/4)$, then 
$r_c$ is a routing function with six routing points:
4 local maxima and 2 saddles with index 1.  
\end{example}

\begin{example}\label{ex:WhitneyAxes}
For $X=V_\bR(x_1^2-x_2^2x_3)$, consider
$f(x) = x_1x_2x_3$ which means that $X_f$
is the set of all points in $X$ where all three
coordinates are nonzero.  The function
$$
r_0(x) = \frac{x_1x_2x_3}{(x_1^2+x_2^2+x_3^2+1)^2}
$$
is not a routing function since
two pairs of routing points are in the same
level set.  However, as suggested by the proof of Theorem~\ref{thm:RoutingFunction},
perturbing away from being centered at the origin
destroys the symmetric structure so that, 
say $c=(1/6,1/5,1/4)$, yields a routing function, namely
\begin{equation}\label{eq:WhitneyR0}
r_c(x) = \frac{x_1x_2x_3}{((x_1-1/6)^2+(x_2-1/5)^2+(x_3-1/4)^2+1)^2}.
\end{equation}
This yields four routing points, two are local
maxima with $r>0$ and two are local minima with
$r<0$.
With the sign dependent notion of index,
all four have index 0.

\begin{figure}[!b]
    \centering
    \includegraphics[scale=0.2]{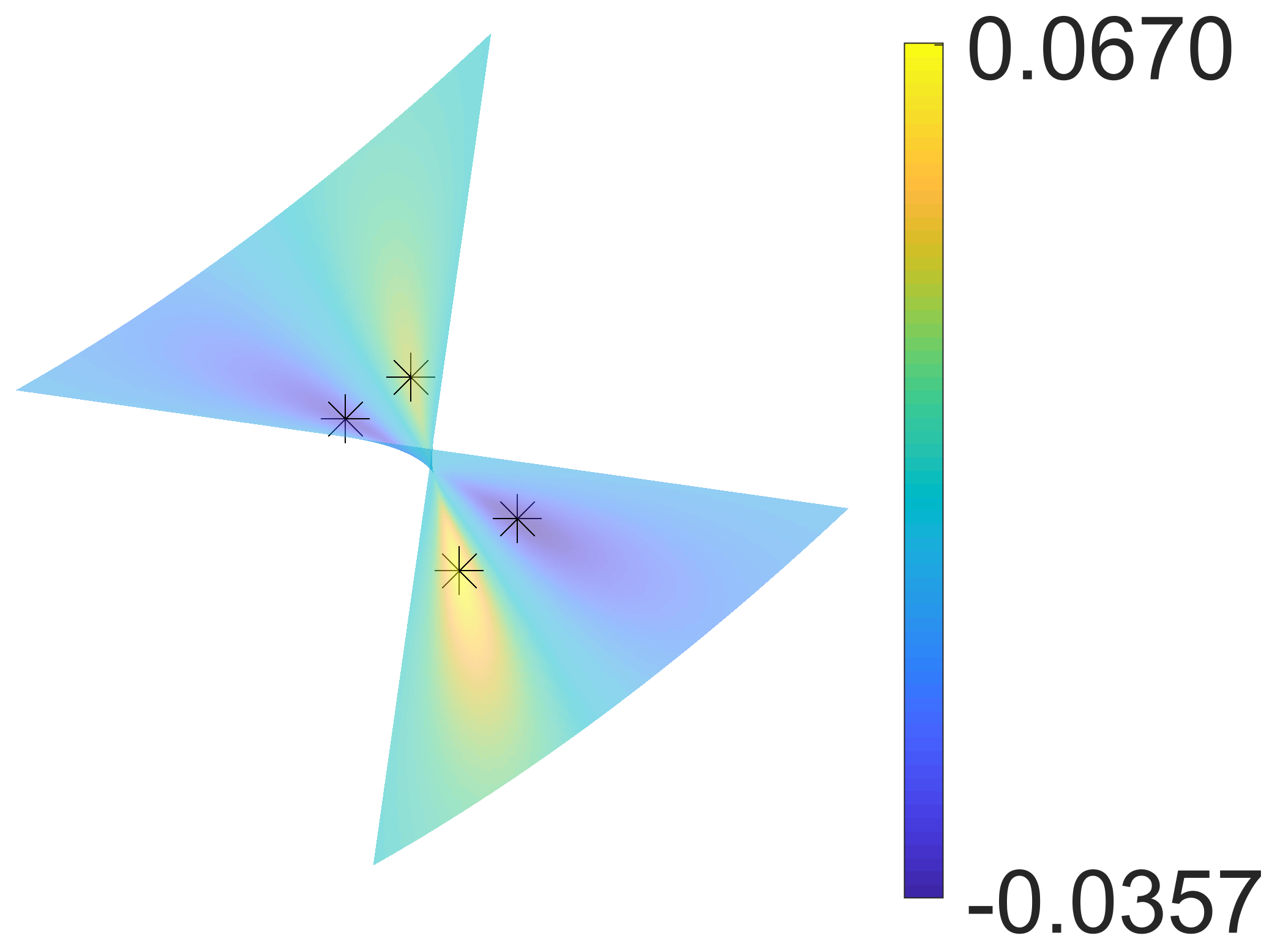}
    \caption{Whitney umbrella with coordinate axes removed
    and four routing points marked, each corresponding to a local optima of $r_c$ as in \eqref{eq:WhitneyR0}.}
    \label{fig:WhitneyAxesRouting}
\end{figure}
\end{example}

For a routing function $r$ on $X$,
the Euler characteristic of $X_r$ 
is determined by counting the number of routing points 
of each index as summarized in the following.

\begin{theorem}\label{thm:Euler}
When $r$ is a routing function for $X$ as in \eqref{eq:X}
which satisfies Assumption~\ref{assume:NullSpace},
then the Euler characteristic of $X_r$ is
\begin{equation}\label{eq:Euler}
    \chi(X_r) = \sum_{j=0}^d (-1)^j \rk_j^{X_r}
\end{equation}
where $\rk_j^{X_r}$ is the number of routing points of $r$ on $X_r$ of index $j$.
\end{theorem}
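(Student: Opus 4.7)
The plan is to reduce the formula to classical Morse theory after separating $X_r$ according to the sign of $r$. Write $X_r=M_+\sqcup M_-$ with $M_\pm=X_r\cap\{\pm r>0\}$. Since $r$ is continuous and nowhere zero on $X_r$, the sets $M_\pm$ are disjoint, open and closed in $X_r$, and semi-algebraic, so by additivity of the Euler characteristic on semi-algebraic sets one has $\chi(X_r)=\chi(M_+)+\chi(M_-)$. It therefore suffices to compute each $\chi(M_\pm)$ as an alternating sum over routing points contained in $M_\pm$.

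On each $M_\pm$ I would produce a Morse function whose Morse index at every critical point coincides with the routing index. Define $\phi:=-r$ on $M_+$ and $\phi:=r$ on $M_-$. The critical points of $\phi$ are exactly the routing points of $r$ lying in $M_\pm$, and at such a point $z$ one has $H_{X_r}\phi(z)=\mp H_{X_r}r(z)$, so the Morse index of $\phi$ at $z$ (the number of negative eigenvalues of $H_{X_r}\phi(z)$) equals the number of eigenvalues of $H_{X_r}r(z)$ whose sign agrees with $r(z)$. By definition this is the routing index of $z$, and nondegeneracy is supplied by the hypothesis that $r$ is a routing function.

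The non-compact Morse hypotheses are verified as follows. On $M_\pm$ the function $\phi$ takes values in a bounded interval $(-\sup_{X_r}|r|,\,0)$, and for each $t<0$ the sub-level set $\{\phi\leq t\}=\{|r|\geq -t\}\cap M_\pm$ is closed in $X$ and bounded (by the vanishing-at-infinity axiom for a routing function), hence compact. Choose $t_0$ strictly between $0$ and the largest critical value of $\phi$ on $M_\pm$; then $\{\phi\leq t_0\}$ is compact, has $t_0$ as a regular boundary value, and contains every critical point of $\phi$ in its interior. Using the rescaled gradient flow of $|r|$ on the complementary region $\{0<|r|<-t_0\}$, which is smooth and well-defined there because $|r|$ and its first two derivatives on $X_r$ are bounded and $\nabla_{X_r}|r|$ has no zeros away from the routing points, I would deformation retract $M_\pm$ onto $\{\phi\leq t_0\}$: the flow strictly increases $|r|$, so it cannot exit $M_\pm$ (where $|r|$ would vanish) nor escape to infinity (where $|r|\to 0$), and it reaches $\{|r|=-t_0\}$ in finite time.

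The classical cell-attachment form of Morse theory (Milnor's theorem applied to $\phi^{-1}[a,b]$ compact with no critical value in $(a,b)$) then endows $\{\phi\leq t_0\}$ with the homotopy type of a CW complex having one $k$-cell per critical point of $\phi$ of Morse index $k$. Summing Euler characteristics over $M_+$ and $M_-$ and using the identification of Morse index with routing index yields $\chi(X_r)=\sum_{j=0}^{d}(-1)^{j}\rk_j^{X_r}$. The main obstacle is the non-compactness of $X_r$: classical Morse theory delivers the alternating-sum formula directly only on compact manifolds or for proper Morse functions, and the real work is in leveraging the vanishing-at-infinity and boundedness axioms of a routing function to produce a compact core containing all routing points onto which $M_\pm$ honestly deformation retracts.
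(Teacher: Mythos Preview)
Your proof is correct and follows essentially the same approach as the paper: split $X_r$ by the sign of $r$, retract each piece along the gradient flow onto a compact superlevel set of $|r|$ containing all routing points (using the vanishing-at-infinity axiom for compactness), and then invoke classical Morse theory on that compact manifold with boundary. The only cosmetic difference is that the paper decomposes into connected components rather than the two sign pieces $M_\pm$, and it cites \cite{Liviu} directly for the alternating-sum formula on the compact core rather than going through the CW cell-attachment statement; your introduction of $\phi=\mp r$ to align Morse index with routing index is exactly what the paper's sign-dependent index convention encodes.
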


\begin{proof}
    Let $C_1, \dotsc, C_s$ be the connected components of $X_r$. Since the Euler characteristic is additive, it is enough to prove the formula on each connected component.
    
    Suppose that $C$ is a connected component of $X_r$
    and let $z_1,\dots,z_k$ be the routing points of~$r$ contained 
    in $C$.  
Since $r$ has the same sign on $C$, we will consider
    the positive and negative cases separately. 
    Suppose that $r>0$ on $C$ and let $0 < \delta < \min\{r(z_1),\dots,r(z_k)\}$.    
    Thus, $r$ is a Morse function on~$C$ corresponding to the gradient vector field $\nabla_{X_r}r$
    and we can retract $C$ to $C_{\delta} = (r|_C)^{-1}([\delta, \infty))$ via the vector field, where $r|_C$ is the restriction of $r$ to $C$.  
    Since $C_\delta$ is a smooth compact manifold with boundary as $r|_C$ is bounded, it is well-known, e.g. \cite[\S 2.3]{Liviu}, that 
    \[
        \chi(C_\delta) = \sum_{j=0}^d (-1)^j \rk_j^{C}
    \]
    where $\rk_j^C$ is the number of routing points in $C$ of index $j$ (corresponding to positive eigenvalues). 
    As the Euler characteristic is invariant under homotopy, $\chi(C) = \chi(C_\delta)$ as claimed.

    Similarly, if $r<0$ on $C$, let
    $\max\{r(z_1),\dots,r(z_k)\} < \delta < 0$.
    Thus, $r$ is a Morse function on $C$ corresponding to the gradient vector field $-\nabla_{X_r}r$
    and we can retract $C$ to $C_\delta = (r|_C)^{-1}((-\infty, \delta])$ via the vector field.
    The same formula holds with index
    corresponding to negative eigenvalues.
\end{proof}

\begin{example}\label{ex:Euler}
For Example~\ref{ex:LinesRouting}, the corresponding Euler characteristic is $(-1)^0\cdot 4 = 4$.
Additionally, the Euler characteristic of the unit
circle is $(-1)^0\cdot 1 + (-1)^1\cdot 1 = 0$
from Example~\ref{ex:Circle}.
From Example~\ref{ex:WhitneyRouting}, 
the Euler characteristic of the Whitney umbrella
with the ``handle'' removed is $(-1)^0\cdot 4 + (-1)^1\cdot 2 = 2$.
Finally, the Euler characteristic of the Whitney umbrella
with the coordinate axes removed 
is $(-1)^0\cdot 4 = 4$ from Example~\ref{ex:WhitneyAxes}.
\end{example}

\section{Connectivity algorithms}\label{sec:Algorithms}

The number of connected components is bounded above
by the number of routing points of index~$0$
via Proposition~\ref{Prop:ConnectedCompRoutingPoints}.
The following shows how to partition the set
of routing points into subsets precisely corresponding
to the connected components 
using the gradient system \eqref{eq:GradientSystem}.
The key to this computation is tracking 
from routing points with positive index.
Since a routing point is a stationary point of
\eqref{eq:GradientSystem}, for routing points of positive
index, one needs to consider 
\eqref{eq:GradientSystem} in an instantaneous initial direction
in order to have non-stationary trajectory.
By adapting the approach of
\cite{HongConnectivity,ConnectivitySemiAlg},
this yields a connectivity algorithm.  
Thus, this gradient representation of smoothly connected components
via a routing function and routing points provides
an analog to witness sets 
for complex irreducible varieties which permit 
membership testing, e.g., see~\cite[Chap.~13-15]{SW05}.

To set the stage, first consider an
initial point which is not a routing point.

\begin{proposition}\label{Prop:TrajectoryNotRouting}
Suppose that $X$ in \eqref{eq:X} satisfies
Assumption~\ref{assume:NullSpace}
and $r$ is a routing function on~$X$.  
If $x_0\in X_r$ is not a routing point, i.e., $\nabla_{X_r}r(x_0)\neq 0$, then \eqref{eq:GradientSystem} 
defines a unique trajectory which limits to a routing point of $r$
on $X_r$.    
\end{proposition}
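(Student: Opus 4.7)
The plan is to treat \eqref{eq:GradientSystem} (with $r$ in place of $f$) as a gradient flow on the smooth manifold $X_r$ and combine standard ODE theory with the routing-function conditions. I will assume without loss of generality that $r(x_0) > 0$, so $\sigma_0 = +1$; the case $r(x_0) < 0$ is entirely symmetric. Using the local tangential parameterization from Section~\ref{sec:Gradients}, $\nabla_{X_r} r$ is a $C^1$ vector field on $X_r$ (since $r \in C^2$), so Picard--Lindel\"of yields a unique maximal trajectory $x(t)$ with $x(0) = x_0$.

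My first step is to confine $x(t)$ to a compact subset of $X_r$. Along the trajectory,
\[
\frac{d}{dt} r(x(t)) \;=\; \nabla_{X_r} r(x(t))^T \dot{x}(t) \;=\; \|\nabla_{X_r} r(x(t))\|^2 \;\ge\; 0,
\]
so $r(x(t)) \ge r(x_0) > 0$ on the domain of existence; in particular, $x(t) \in X_r$. By condition~$2$ of the routing-function definition, there exists $\delta > 0$ such that $|r(x)| \le r(x_0)/2$ for all $x \in X$ with $\|x\| \ge \delta$, which forces $\|x(t)\| < \delta$ throughout. Combining this confinement with boundedness of $\nabla_{X_r} r$ on $X_r$ (condition~$5$), standard ODE extension then shows that $x(t)$ exists for all $t \ge 0$.

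Next, I will analyze the $\omega$-limit set $\omega(x_0) = \bigcap_{T \ge 0} \overline{\{x(t) : t \ge T\}}$. Since the trajectory lies in the compact set $K = \{x \in X : r(x) \ge r(x_0)\} \cap \{x : \|x\| \le \delta\} \subset X_r$, the set $\omega(x_0) \subset K$ is nonempty, compact, and connected. Monotonicity together with boundedness of $r$ (condition~$5$) gives $r(x(t)) \nearrow \rho$ for some $\rho \ge r(x_0) > 0$, so $r \equiv \rho$ on $\omega(x_0)$; in particular $\omega(x_0) \subset X_r$. The standard gradient-flow argument---if some $y \in \omega(x_0)$ had $\nabla_{X_r} r(y) \ne 0$, then continuity would give a neighborhood of $y$ on which $\|\nabla_{X_r} r\|^2$ is bounded below, and repeated passages of $x(t)$ near $y$ would contradict $\int_0^\infty \|\nabla_{X_r} r(x(t))\|^2\, dt = \rho - r(x_0) < \infty$---forces every point of $\omega(x_0)$ to be a routing point of $r$ on $X_r$.

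Finally, condition~$3$ guarantees that the set of routing points is finite and hence totally disconnected; combined with connectedness of $\omega(x_0)$, this forces $\omega(x_0)$ to be a single routing point $z^\ast$, which is the unique limit of $x(t)$ as $t \to \infty$. The main delicate point will be ensuring $\omega(x_0)$ stays inside $X_r$ rather than leaking onto $V_\bR(r)$ (handled by the strict lower bound $r(x(t)) \ge r(x_0) > 0$) and upgrading ``$\omega$-limit consists of critical points'' to ``the trajectory converges to a single critical point'' (handled by discreteness of the routing-point set together with connectedness of $\omega(x_0)$).
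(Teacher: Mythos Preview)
Your proof is correct and follows essentially the same route as the paper's: both invoke standard ODE existence/uniqueness on the manifold, use monotonicity of $r$ along the flow together with the decay-at-infinity and boundedness conditions of a routing function to confine the trajectory to a compact subset of $X_r$, and conclude that the limit is a routing point. Your $\omega$-limit argument (finite integral of $\|\nabla_{X_r} r\|^2$, connectedness of $\omega(x_0)$, and discreteness of the routing points) is in fact more careful than the paper's, which simply asserts that $z=\lim_{t\to\infty} x(t)$ is well-defined from boundedness and monotonicity without spelling out why the trajectory cannot have multiple accumulation points.
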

\begin{proof}
Recall that for a routing function, $r$, $\nabla_{X_r} r$, and $H_{X_r}r$ are all 
bounded on $X_r$ and hence,
for any $a>0$,
$X_r\cap r^{-1}((-\infty,-a]\cup[a,\infty))$
is compact.
Standard existence and uniqueness theory 
for initial value problems, e.g., \cite[\S~8.1, Thm.~3]{KincaidCheney}, adapted to manifolds
shows that
\eqref{eq:GradientSystem} has a unique solution $x(t)\in X_r$
for all $t\geq0$.  Moreover,
since $x_0$ is not a routing point, 
$r(x(t))$ is strictly monotonic
while $x(t)$ must be bounded.  
Hence, $z = \lim_{t\rightarrow\infty} x(t)$ is well-defined
with $|r(z)|>|r(x_0)|>0$.  
Boundedness also implies
that one must have $\nabla_{X_r}r(z) = 0$,
i.e., the trajectory limits to a routing point.
\end{proof}

Proposition~\ref{Prop:TrajectoryNotRouting} 
together with stationary trajectories
starting at routing points shows that
the gradient vector field on $X_r$ defined
by $\sign(r(x))\cdot\nabla_{X_r}r(x)$
is complete.  

Next, consider an initial point which is a routing point
$z\in X_r$ with an initial direction $v\in\bR^n$
in the tangent space of $X_r$ at $z$ with $\|v\|=1$ 
Since $z$ is nondegenerate,
there exists $\epsilon_0>0$ such that
$z$ is the unique routing point in
$X_r\cap B_{\epsilon_0}(z)$ and the orthogonal
projection from $X_r$ to $T_z X_r$ centered at $z$
is invertible.  Thus, for $\epsilon\in(0,\epsilon_0)$,
one can apply Proposition~\ref{Prop:TrajectoryNotRouting}
with initial condition $x_0 = \pi^{-1}(z+\epsilon\cdot v)$
to yield trajectory $x_\epsilon(t)$.  
By uniqueness, $\lim_{\epsilon\rightarrow0^+} x_\epsilon(t)$
is well-defined and limits to a routing point of $r$ on
$X_r$.  This is summarized in the following.

\begin{proposition}\label{Prop:TrajectoryRouting}
Suppose that $X$ in \eqref{eq:X} satisfies
Assumption~\ref{assume:NullSpace}
and $r$ is a routing function on~$X$.  
Suppose that $z\in X_r$ is a routing 
point and $v\in\bR^n$ is a unit
vector in the tangent space of $X_r$ at $z$.
Letting $x_\epsilon(t)$
be the trajectory from Proposition~\ref{Prop:TrajectoryNotRouting}
starting at the orthogonal projection of $z+\epsilon\cdot v$
onto $X_r$,
then $x(t) = \lim_{\epsilon\rightarrow0^+} x_\epsilon(t)$
is well-defined trajectory 
which limits to a routing point of $r$~on~$X_r$.
\end{proposition}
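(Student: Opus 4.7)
The plan is to combine the local smooth structure at the nondegenerate routing point $z$ with continuous dependence of ODE solutions on initial data, using Proposition~\ref{Prop:TrajectoryNotRouting} for each positive $\epsilon$.

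First, I would verify that the family of initial conditions $\{x_{0,\epsilon}\}_{\epsilon>0}$ is well-defined and consists of non-routing points. Since $z\in X_r\subset\Xsmooth$ is a nondegenerate routing point and the set of routing points on $X_r$ is finite, one can pick $\epsilon_0>0$ so that $z$ is the unique routing point in $X_r\cap B_{\epsilon_0}(z)$ and so that $\pi_z$ is a diffeomorphism of $X_r\cap B_{\epsilon_0}(z)$ onto an open neighborhood $U_z\subset T_z X_r$ of the origin (exactly as in the tangential parameterization of Section~\ref{sec:Gradients}). Shrinking $\epsilon_0$ so that $\epsilon v\in U_z$ for every $\epsilon\in(0,\epsilon_0)$, the point $x_{0,\epsilon}=\pi_z^{-1}(z+\epsilon v)$ lies in $X_r\cap B_{\epsilon_0}(z)$ and, by injectivity of $\pi_z^{-1}$ together with $v\neq 0$, satisfies $x_{0,\epsilon}\neq z$; hence it is not a routing point. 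Proposition~\ref{Prop:TrajectoryNotRouting} then produces a unique trajectory $x_\epsilon(t)$ on $X_r$ for all $t\geq 0$, and this trajectory limits to a routing point as $t\to\infty$.

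Second, I would establish the pointwise limit by appealing to continuous dependence of the flow on initial data. On the component of $X_r$ containing $x_{0,\epsilon}$ the sign $\sigma=\sign r(x_{0,\epsilon})$ agrees with $\sign r(z)$ for $\epsilon$ small, since $r$ is continuous and $r(z)\neq 0$; the driving vector field $\sigma\cdot\nabla_{X_r}r$ is therefore $C^1$ in a neighborhood of $z$, and standard ODE theory (in the tangential parameterization centered at $z$) gives a local flow $\phi_t$ that is jointly continuous in $(t,x_0)$. Continuity of $\pi_z^{-1}$ at the origin gives $x_{0,\epsilon}\to z$ as $\epsilon\to 0^+$, and hence for every fixed $t\geq 0$,
\[
x(t)=\lim_{\epsilon\to 0^+}x_\epsilon(t)=\lim_{\epsilon\to 0^+}\phi_t(x_{0,\epsilon})=\phi_t(z).
\]
Because $\nabla_{X_r}r(z)=0$, the point $z$ is stationary for the gradient system, so $\phi_t(z)=z$ and consequently $x(t)\equiv z$ is a well-defined trajectory on $X_r$ which trivially limits to the routing point $z$ as $t\to\infty$.

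The step requiring the most care is the joint continuity of $\phi_t$ on the manifold $X_r$, which I would handle by passing to the local tangential parameterization of Section~\ref{sec:Gradients} and invoking the classical continuous-dependence theorem on an open subset of $\bR^d$; the uniform bounds on $r$, $\nabla_{X_r}r$, and $H_{X_r}r$ guaranteed by the routing function hypothesis provide the Lipschitz estimates needed to make this rigorous. It is worth flagging that the limit trajectory turns out to be stationary at $z$: the proposition is really a \emph{soft} well-definedness statement whose algorithmic value lies in justifying the perturbed initialization $\pi_z^{-1}(z+\epsilon v)$ for small but positive $\epsilon$, whose large-$t$ behavior along the unstable eigenvector directions of $H_{X_r}r(z)$ is captured by applying Proposition~\ref{Prop:TrajectoryNotRouting} to each $x_\epsilon$ rather than by the $\epsilon\to 0^+$ pointwise limit at fixed $t$.
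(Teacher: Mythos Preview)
Your argument is correct and mirrors the paper's own reasoning, which is given informally in the paragraph immediately preceding the proposition rather than in a separate proof environment: choose $\epsilon_0>0$ so that $z$ is the unique routing point in $X_r\cap B_{\epsilon_0}(z)$ and the orthogonal projection $\pi_z$ is invertible there, apply Proposition~\ref{Prop:TrajectoryNotRouting} to each $x_{0,\epsilon}=\pi_z^{-1}(z+\epsilon v)$, and pass to the limit ``by uniqueness'' (which you correctly unpack as continuous dependence of the flow on initial data in local tangential coordinates).

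Your closing remark is sharper than the paper's treatment and worth highlighting. The pointwise limit is indeed the stationary trajectory $x(t)\equiv z$, so the proposition as literally stated is satisfied with the limiting routing point being $z$ itself; the paper's one-line justification is consistent with this but does not say so. The algorithmically relevant fact used later in Section~\ref{sec:Proofs} and in Algorithm~\ref{alg:Connectivity}---that perturbing along an \emph{unstable} eigenvector and flowing yields a \emph{different} routing point $z'$ with $|r(z')|>|r(z)|$---is really a statement about the common $t\to\infty$ endpoint of the trajectories $x_\epsilon$ for small positive $\epsilon$ (which stabilizes by finiteness of routing points and strict monotonicity of $r$ along each $x_\epsilon$), not about the $\epsilon\to0^+$ limit of the parameterized curves at fixed $t$. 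You have correctly separated these two notions where the paper conflates them.
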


\begin{example}\label{ex:WhitneyTrajectories}
To illustrate Propositions~\ref{Prop:TrajectoryNotRouting}
and~\ref{Prop:TrajectoryRouting},
consider Example~\ref{ex:WhitneyRouting}
with $c=(1/2,1/3,1/4)$.  
First, consider the trajectory emanating 
from the non-routing point $x_0 = (-2.25,1.5,2.25)$
which limits to a routing point that is a local maximum. 
In Figure~\ref{fig:WhitneyTrajectoriesRouting},
$x_0$ is shown in red with the trajectory (yellow)
limiting to a routing point (black).  
Second, consider the trajectories emanating
from the index 1 saddle points, 
approximately $(-0.5002, 1.0635, 0.2212)$
and $(-0.5255, -1.3526, 0.1509)$,
in the two directions arising from the unstable eigenvector.
In Figure~\ref{fig:WhitneyTrajectoriesRouting},
each of these two trajectories are shown (green and magenta)
which limit to a routing point that is a local maximum. 

\begin{figure}[!ht]
    \centering
    \includegraphics[scale=0.35]{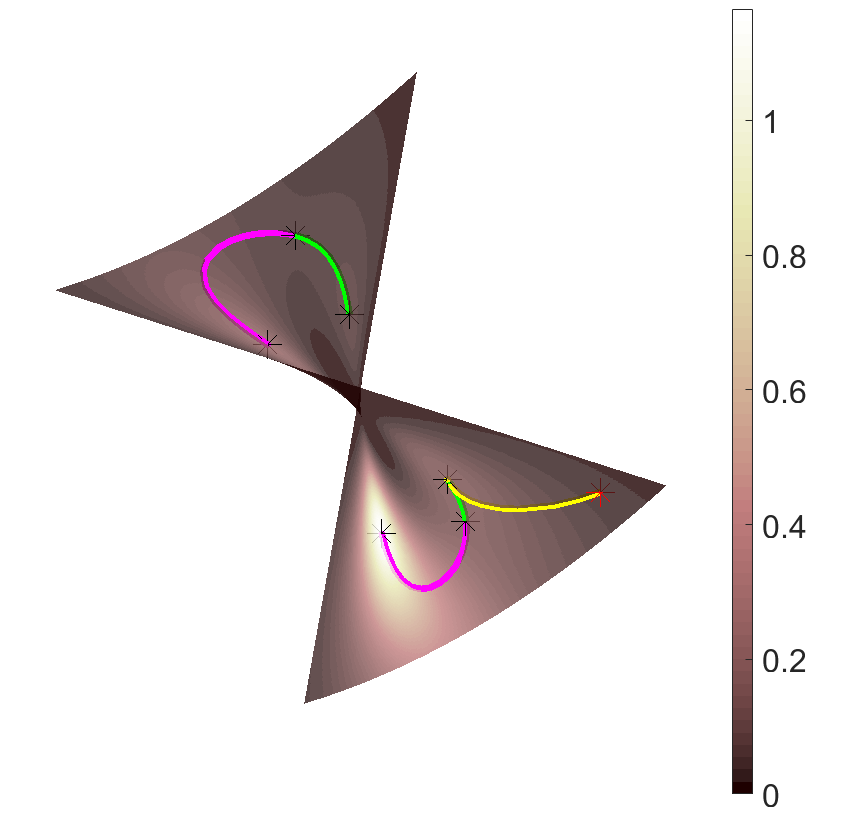}
    \caption{Whitney umbrella with ``handle'' removed
    and six routing points marked (black).
    Illustration of a trajectory (yellow) from a non-routing point (red)
    along with two trajectories (green and magenta) emanating from each index 1 saddle
    in the unstable eigenvector direction
    which connects the local maxima on the same connected component.}
    \label{fig:WhitneyTrajectoriesRouting}
\end{figure}
\end{example}

As observed in Figure~\ref{fig:WhitneyTrajectoriesRouting},
the trajectories from the saddles in an unstable
eigenvector direction connect the local optimum
that lie in the same connected component.  
This holds in general and is summarized
in Algorithm~\ref{alg:Connectivity} and Theorem~\ref{thm:Connectivity}.

\SetKwInput{KwIn}{Input}
\SetKwInput{KwOut}{Output}

\begin{algorithm}[!t]
\caption{Euler characteristic and connected components}  
\label{alg:Connectivity}
\SetAlgoLined
\KwIn{Polynomials $g_1,\dots,g_k\in\bR[x_1,\dots,x_n]$ with $X = V_\bR(g_1,\dots,g_k)$ satisfying Assumption~\ref{assume:NullSpace}
and routing function $r$.}
\KwOut{Euler characteristic of $X_r$ and partitioned subset of the routing points of $r$ on $X_r$
corresponding to the connected components of $X_r$.}

Computing routing points $r$ on $X_r$, say $z_1,\dots,z_m$,
and corresponding indices, say $i_1,\dots,i_m$.

Compute $\chi = \sum_{j=1}^m (-1)^{i_j}$.

Initialize $A = I_m$, the $m\times m$ identity matrix.

\For{$j=1,\dots,m$}{
  \ForEach{unstable eigenvector $v$ for $H_{X_r}r(z_j)$}{ 
    Compute limit routing point from $z_j$
    in the direction $v$ with respect to $r$, say $z_{w_+}$.

    Set $A_{jw_+} = A_{w_+j} = 1$.

    Compute limit routing point from $z_j$
    in the direction $-v$ with respect to $r$, say $z_{w_-}$.

    Set $A_{jw_-} = A_{w_-j} = 1$.
  }
}

Set $M$ to be the transitive closure of $A$.

Partition $\{z_1,\dots,z_m\}$ based on the connected
components of $M$, say $C_1,\dots,C_s$.

\Return{($\chi$,$\{C_1,\dots,C_s\}$)}

\end{algorithm}

The matrix $A$ in Algorithm~\ref{alg:Connectivity} 
is constructed to be reflexive (diagonal
entries are $1$) and symmetric.  
Thus, transitive closure means
to enforce the transitivity property,
i.e., if $z_i$ is connected to $z_j$ which is connected
to $z_k$, then $z_i$ is connected to $z_k$.
Using Boolean matrix multiplication and addition,
the transitive closure of $A\in\bR^{m\times m}$ is 
$$M = A + A^2 + \cdots + A^m.$$
In particular, $M_{ij} = 1$ if and only if 
$z_i$ and $z_j$ lie on the same connected component
of $X_r$.

Algorithm~\ref{alg:ConnectivityQ} uses the output
of Algorithm~\ref{alg:Connectivity} 
as input to answer connectivity queries.  

\begin{algorithm}[!t]
\caption{Connectivity query}  
\label{alg:ConnectivityQ}
\SetAlgoLined
\KwIn{Polynomials $g_1,\dots,g_k\in\bR[x_1,\dots,x_n]$ with $X = V_\bR(g_1,\dots,g_k)$ satisfying Assumption~\ref{assume:NullSpace}, 
routing function $r$, partitioned subsets $C_1,\dots,C_s$ 
of the routing points of $r$ on $X_r$
corresponding to the connected components of $X_r$,
and points $p,q\in X_r$.}
\KwOut{$True$ if $p$ and $q$ lie on the same
connected component of $X_r$ and $False$ otherwise.}

\eIf{$p$ is a routing point}{
  Set $p'\in\{1,\dots,s\}$ so that $p\in C_{p'}$.
}{
  Compute limit routing point $z_{j_p}$ via Proposition~\ref{Prop:TrajectoryNotRouting}
  starting at $p$.

  Set $p'\in\{1,\dots,s\}$ so that $z_{j_p}\in C_{p'}$.
}

\eIf{$q$ is a routing point}{
  Set $q'\in\{1,\dots,s\}$ so that $q\in C_{q'}$.
}{
  Compute limit routing point $z_{j_q}$ via Proposition~\ref{Prop:TrajectoryNotRouting}
  starting at $q$.

  Set $q'\in\{1,\dots,s\}$ so that $z_{j_q}\in C_{q'}$.
}
\Return{$True$ if $p'=q'$, else $False$}

\end{algorithm}

\begin{theorem}\label{thm:Connectivity}
Algorithms~\ref{alg:Connectivity} 
and~\ref{alg:ConnectivityQ} are correct.    
\end{theorem}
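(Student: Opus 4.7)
My plan is to verify each output of the two algorithms in turn. The Euler characteristic computation in Algorithm~\ref{alg:Connectivity} is immediate from Theorem~\ref{thm:Euler}, so the main work is justifying the partition of routing points into connected components.

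First I would establish soundness: if the algorithm groups two routing points in the same class, they lie in the same connected component of $X_r$. Each entry $A_{jw_\pm}=1$ set by the algorithm is witnessed by the trajectory of Proposition~\ref{Prop:TrajectoryRouting} starting at $z_j$ in an unstable eigenvector direction $v$, which is a continuous path in $X_r$ limiting to $z_{w_\pm}$. The transitive closure step preserves the ``lies in the same connected component'' equivalence relation, so $M_{ij}=1$ implies that $z_i$ and $z_j$ share a connected component.

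Completeness is the main obstacle: two routing points lying in a common connected component $C$ of $X_r$ must be linked by $A$ after transitive closure. I would invoke Morse theory on the compact retract $C_\delta$ already used in the proof of Theorem~\ref{thm:Euler}. Since each routing point is nondegenerate, $r$ (or $-r$ on a negative component) is a Morse function on $C_\delta$, and $C_\delta$ admits a handle decomposition with one $k$-handle per routing point of index $k$. Connectedness of $C$ (equivalently, of $C_\delta$) is equivalent to connectedness of the graph whose vertices are the index-$0$ critical points and whose edges are the closures of the one-dimensional unstable manifolds of the index-$1$ saddles---precisely the pairs $(z_j, z_{w_\pm})$ recorded by the algorithm. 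The subtle case is when the gradient flow fails to be Morse--Smale, so a saddle trajectory may limit to a higher-index critical point rather than directly to an index-$0$ point; there, the higher-index point itself contributes unstable-direction edges, and transitive closure along the resulting chain restores the required connection. Making this non-transverse case rigorous is the step I expect to be the most delicate.

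Correctness of Algorithm~\ref{alg:ConnectivityQ} then follows from Proposition~\ref{Prop:TrajectoryNotRouting} together with the partition statement above: for an input point that is not already a routing point, its gradient trajectory is a continuous curve in $X_r$ limiting to a routing point, so the point lies in the same connected component of $X_r$ as this limit. Hence $p$ and $q$ lie in the same connected component if and only if their associated routing points $z_{j_p}$ and $z_{j_q}$ do, which is exactly what the algorithm checks.
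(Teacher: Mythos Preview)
Your overall architecture matches the paper's: the Euler characteristic comes from Theorem~\ref{thm:Euler}, soundness from Propositions~\ref{Prop:TrajectoryNotRouting} and~\ref{Prop:TrajectoryRouting}, and Algorithm~\ref{alg:ConnectivityQ} reduces to Algorithm~\ref{alg:Connectivity} just as you describe. The divergence is entirely in the completeness argument.

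The paper does not use a handle decomposition or a CW $1$-skeleton. It works instead with the \emph{stable} manifolds $M_r(z)$ (basins of attraction for the flow). First, by the same ``follow an unstable direction, $|r|$ strictly increases, finitely many routing points'' chain argument you sketch, every routing point is joined by algorithm edges to some index-$0$ routing point. Second, discarding the positive-codimension stable manifolds gives $C=\bigcup\overline{M_r(z)}$ over index-$0$ points $z\in C$, so it suffices to connect index-$0$ points $z,z'$ whose basin closures meet. For that step the paper invokes the \emph{Mountain Pass Theorem} (the Palais--Smale condition holds because $\{|r|\geq a\}$ is compact for $a>0$) to produce an index-$1$ routing point $z''\in\overline{M_r(z)}\cap\overline{M_r(z')}$ whose two unstable-eigenvector trajectories link $z$ and $z'$.

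This is exactly where your acknowledged difficulty lives. A $1$-skeleton connectedness argument needs Morse--Smale transversality, and you have no mechanism to enforce it for the fixed routing function $r$; your suggestion that ``transitive closure along the resulting chain restores the required connection'' is not justified, since a priori the chain emanating from an index-$1$ saddle on the common boundary of two basins could terminate without ever reaching one of the two target index-$0$ points. The paper's mountain-pass argument sidesteps transversality entirely and places the required index-$1$ saddle directly on the shared boundary. So the step you flagged as delicate is a real gap in your plan, and the paper fills it with a different tool than the one you proposed.
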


A proof is presented in Section~\ref{sec:Proofs}.

\begin{example}
From Example~\ref{ex:WhitneyTrajectories},
one can write the matrices $A$ and $M$ in Algorithm~\ref{alg:Connectivity}
as
$$A = \left[\begin{array}{cccccc}
1 & 0 & 1 & 0 & 0 & 0\\
0 & 1 & 1 & 0 & 0 & 0\\
1 & 1 & 1 & 0 & 0 & 0\\
0 & 0 & 0 & 1 & 0 & 1\\
0 & 0 & 0 & 0 & 1 & 1\\
0 & 0 & 0 & 1 & 1 & 1\\
\end{array}\right]
\hbox{~~~and~~~~}
M = \left[\begin{array}{cccccc}
1 & 1 & 1 & 0 & 0 & 0\\
1 & 1 & 1 & 0 & 0 & 0\\
1 & 1 & 1 & 0 & 0 & 0\\
0 & 0 & 0 & 1 & 1 & 1\\
0 & 0 & 0 & 1 & 1 & 1\\
0 & 0 & 0 & 1 & 1 & 1\\
\end{array}\right].
$$
That is, $M$ shows that $X_r$ has two connected components
each corresponding with three routing points, two
local maxima and a saddle of index 1
as illustrated in Figure~\ref{fig:WhitneyTrajectoriesRouting}.
Thus,
since $X_r = \Xsmooth$ where $X$ is the Whitney umbrella,
the Whitney umbrella has two smoothly
connected components.
\end{example}

\begin{example}
From Example~\ref{ex:WhitneyAxes},
the Whitney umbrella with the coordinate axes
removed decomposes into four connected components.
\end{example}

\section{Correctness proofs}\label{sec:Proofs}

The following presents a proof to Theorem~\ref{thm:Connectivity}.
Clearly, the Euler characteristic follows from Theorem~\ref{thm:Euler}.  The correctness
of Algorithm~\ref{alg:ConnectivityQ} 
follows from the statement and proof of
Proposition~\ref{Prop:TrajectoryNotRouting}
with strict monotonicity showing 
$p$ and $z_{j_p}$ as well as $q$ and $z_{j_q}$
lie on the same connected component of $X_r$.
Hence, the correctness of Algorithm~\ref{alg:ConnectivityQ}
follows from the correctness of Algorithm~\ref{alg:Connectivity}.
Similarly, in Algorithm~\ref{alg:Connectivity}, Proposition~\ref{Prop:TrajectoryRouting}
yields that routing point $z_j$ is connected to 
routing points~$z_{w+}$ and~$z_{w-}$.  
Since the transitive closure ensures
the transitivity of the connections described by~$A$,
the only thing left to show regarding 
Algorithm~\ref{alg:Connectivity}
is that the connections derived from unstable eigenvectors
suffice for determining the connected components
via the mountain pass theorem.

For a routing point $z$, the stable manifold of $z$
with respect to $r$ on $X_r$ is
$$M_r(z) = \{z\}\cup\{x_0\in X_r~|~\nabla_{X_r}r(x_0)\neq0
\hbox{~and trajectory from
Proposition~\ref{Prop:TrajectoryNotRouting} limits to $z$}\}.
$$
The proof of \cite[Thm~4.2]{BanyagaHurtubise2004}
can be trivially adapted to this case with appropriate adjustments
to conclude that $\codim M_r(z)$ is the index of $z$
with respect to $r$, e.g., if $z$ is a routing point with 
$r(z) > 0$ and is a local maximum (index 0), then
$\dim M_r(z) = \dim X_r$.
Clearly, uniqueness of trajectories yields
$$X_r = \bigsqcup_{\hbox{routing points $z$}} M_r(z).$$
Therefore, for a connected component $C$ of $X_r$, one has
\begin{equation}\label{eq:ConnectedRouting}
C = \bigsqcup_{\hbox{routing points $z\in C$}} M_r(z).
\end{equation}
In this way, one is identifying each connected component
with the finitely many routing points that lie inside of it.  

Suppose that $z_0\in C$ is a routing point.  If $\dim M_r(z_0) < \dim X_r$,
i.e., the index of $z_0$ is positive, then select any unstable eigenvector
direction and let $z_1$ be the corresponding routing point 
as in Proposition~\ref{Prop:TrajectoryRouting}.
Hence, $z_1\in C$ and $|r(z_0)| < |r(z_1)|$.  If $\dim M_r(z_1) < \dim X_r$, one can
repeat this process yielding a sequence of routing points
$z_0,z_1,\dots$ with $|r(z_j)| < |r(z_{j+1})|$.  
Hence, this must be a sequence of distinct routing points.  
Since there are only finitely many routing
points, this process must terminate after finitely many steps 
yielding, say, a routing point $z_\ell\in C$ with $\dim M_r(z_\ell) = \dim X_r$,
i.e., the index of $z_\ell$ is 0.  Therefore,
this shows that every routing point in~$C$ is connected to some 
routing point of index 0 in $C$ by trajectories following unstable
eigenvector directions.  Hence, all that remains is showing
connectivity between routing points of index~0~in~$C$.

For a routing point $z$, let $\overline{M_r(z)}$ denote the 
Euclidean closure of $M_r(z)$ in $X_r$.  
Since
$$
\bigsqcup_{\hbox{index $> 0$ routing points $z\in C$}} M_r(z)
$$
has positive codimension, it immediately follows that
$$
C = \bigcup_{\hbox{index 0 routing points $z\in C$ }} \overline{M_r(z)}.
$$
Let $z$ and $z'$ be distinct routing points in $C$ of index $0$
such that $S_{z,z'} = \overline{M_r(z)}\cap\overline{M_r(z')}\neq \emptyset$.  
Hence, $r(z) \neq r(z')$.
Since $r$ vanishes at infinity, for any $\alpha\neq 0$, 
$V_{\bR}(r-\alpha)$ is compact.  Thus, $r$ satisfies
the Palais-Smale condition and the Mountain Pass Theorem \cite{Ambrosetti84} (see 
also \cite[Thm.~3]{MountainPass}) shows the existence of an index 1 routing
point $z''$ in $S_{z,z'}$ so that $z$ and $z'$ are connected by trajectories
emanating from the unstable eigenvector direction at $z''$.  

Since $C$ is connected and there are only finitely many
routing points, one can repeat this argument to create a sequence
of connecting trajectories between any two index $0$ routing points in $C$.
Therefore, Algorithm~\ref{alg:Connectivity}
correctly identifies the connected components of $X_r$.

\section{Examples}\label{sec:Examples}

The following considers various examples for computing
smoothly connected components.  The routing points
were computed using {\tt Bertini}~\cite{BHSW06}
and the trajectories were computed using {\tt Matlab}.

\subsection{Positive solutions}

In order to compute the number of smoothly connected
components in the positive orthant, one can choose
a routing function $r$ so that $X_r\subset (\bR^*)^n$
where $\bR^* = \bR\setminus\{0\}$ and then only
consider routing points in positive orthant.  
For example, one can reconsider
both Example~\ref{ex:LinesRouting} 
and Example~\ref{ex:WhitneyAxes} 
to see that each has a single routing point
in the positive orthant, i.e., each has one
smoothly connected component in the positive orthant.

\begin{figure}[!t]
    \centering
    \includegraphics[scale=0.15]{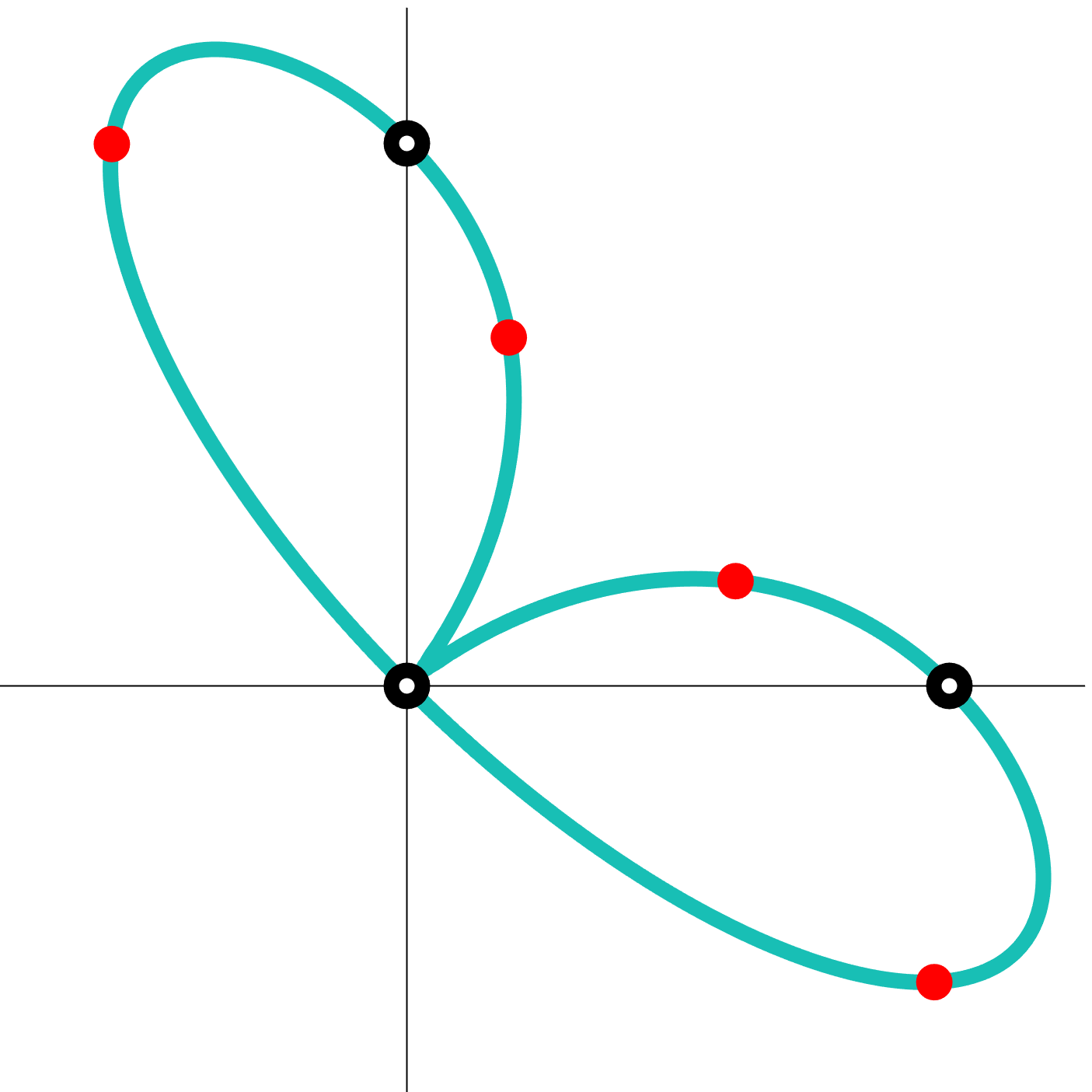}
    \caption{Four routing points for a quartic curve away from the coordinate axes}
    \label{fig:Quartic}
\end{figure}

For another example, consider the compact quartic 
curve $X\subset\bR^2$ from \cite[Ex.~9]{RealTropical}
defined by
$$g(x) = x_1^4 + x_2^4 - (x_1 - x_2)^2(x_1 + x_2) = 0.$$
Since $X\cap(\bR^*)^2\subset\Xsmooth$, one can consider the routing
function 
$$r(x) = \frac{x_1x_2}{((x_1-1/3)^2+(x_2-1/2)^2+1)^2}.$$
This yields four routing points as shown
in Figure~\ref{fig:Quartic}.  
Two of the routing points have a negative coordinate
and are ignored.  This leaves two routing points
with positive coordinates, each of which is a local maximum
yielding that $X\cap(\bR_{>0})^2$ has two smoothly
connected components.

\subsection{Some surfaces}

With the Whitney umbrella used as an illustrative example, 
the following summarizes computing the Euler characteristic 
and the number of smoothly connected components for the following 
surfaces in $\bR^3$:\footnote{See 
\url{https://homepage.univie.ac.at/herwig.hauser/bildergalerie/gallery.html},
\url{https://silviana.org/gallery/hauser/}, and
\url{https://www-sop.inria.fr/galaad/surface/}
for additional information.}
\begin{itemize}
    \item (Dingdong) $g(x) =x_1^2 + x_2^2 - x_3^2 + x_3^3$
    \item (Calypso) $g(x) = x_1^2 + x_2^2x_3 - x_3^2$
    \item (Chubs) $g(x) = x_1^4 + x_2^4 + x_3^4 - (x_1^2 + x_2^2 + x_3^2) + 1/2$
    \item (Twilight) $g(x) = (x_1^2 + x_2^2 - 3)^3 + (x_3^3 - 2)^2$
    \item (Eistute) $g(x) = (x_1^2+x_2^2)^3 - 4x_1^2x_2^2(x_3^2+1)$
    \item (Seepferdchen) $g(x) = x_1^4 - 5x_1^2x_2^3/2 + x_2^6 - (x_1 + x_2^2)x_3^3$
\end{itemize}
Table~\ref{tab:Surfaces} summarizes
the results of the computations when taking $f(x) = \|\nabla_{\bR^3} g(x)\|^2$
and 
$$r(x) = \frac{f(x)}{((x_1-c_1)^2+(x_2-c_2)^2+(x_3-c_3)^2+1)^{\deg g}}
\hbox{~~where~~} 
c = 
\left[\begin{array}{c}
0.7978234324 \\
0.6623073432 \\
0.2347907832 \\
\end{array}\right]
$$
in which $c$ was selected randomly.

\begin{table}[!t]
    \centering
    \begin{tabular}{c|c|c|c|c|c}
         Surface & \# index 0 & \# index 1 & \# index 2 & \multicolumn{1}{c|}{$\chi$} & \# smoothly connected components \\
         \hline
         \hline
         Dingdong & 2 & 2 & 1 & 1 & 2 \\
         \hline
         Calypso & 2 & 2 & 0 & 0 & 2 \\
         \hline 
         Chubs & 32 & 44 & 4 & $-8$ & 8 \\
         \hline
         Twilight & 2 & 1 & 1 & 2 & 2 \\
         \hline 
         Eistute & 7 & 4 & 1 & 4 & 4 \\
         \hline
         Seepferdchen & 7 & 11 & 2 & $-2$ & 1 \\
    \end{tabular}
    \caption{Summary data for some algebraic surfaces in $\bR^3$}
    \label{tab:Surfaces}
\end{table}

\subsection{Connectivity in real projective space}\label{sec:RealProjSpace}

In \cite[Ex.~6.4]{MaximalMumfordCurves}, 
for small $\epsilon>0$,
the following octic curve in $\bP_{\bR}^4$
is shown to consist of six~ovals:
$$
\left[\begin{array}{c}
(x_2+x_3)(x_2+x_3-x_4) + \epsilon(x_1^2+2x_1x_3-x_1x_4+x_3^2-x_3x_4) + \epsilon^2 x_0^2 \\
x_0(x_2+x_3-x_4) + \epsilon(x_0x_3-x_0x_4 + x_1x_3 - x_1x_4 + x_2x_4+x_3x_4 - x_4^2) - \epsilon^2 x_4^2 \\
x_0(x_1+x_3)+\epsilon(x_0x_4+x_1x_2+x_1x_3+x_1x_4+x_2x_3-x_2x_4+x_3^2)
\end{array}\right] = 0.
$$
To verify this, fix $\epsilon=10^{-2}$,
and consider the double cover
on the unit sphere in $\bR^5$ by appending $x_0^2+\cdots+x_4^2-1$ to the system above yielding 
a smooth compact degree 16 curve $X\subset\bR^5$.  
Since~$X$ is smooth and compact with
$X\cap V(x_4) = \emptyset$, 
we can use the routing function
$r(x) = x_4$ with $X = X_r$.  
This yields 40 routing points which arise as 20 pairs
of antipodal points.  Thus, one only needs to consider
the 20 routing points with $x_4>0$
which arise as 10 local maxima (index 0) 
and 10 local minima (index 1).
Using gradient ascent from the local minima,
this yields 6 connected components,
2 with a single local maximum and local minimum
and 4 with two local maxima and local minima,
confirming the results in \cite[Ex.~6.4]{MaximalMumfordCurves}.

To justify our choice of $\epsilon$, we also considered the system 
where $\epsilon$ was a free parameter
and computed the discriminant with respect to $\epsilon$.
This computation showed that the smallest positive root of the discriminant with respect 
to $\epsilon$ to be approximately $0.01438729081$. 
Hence, for any $\epsilon>0$ less than this value, the 
structure of the routing points will be the same
yielding an octic curve with the same real geometry.
In particular, this justifies our choice of $\epsilon = 10^{-2}$ as it is smaller than the smallest positive root of the discriminant. 

\subsection{Counting input modes for a five-bar mechanism}

\begin{figure}[!b]
    \centering
    \includegraphics[scale=0.3]{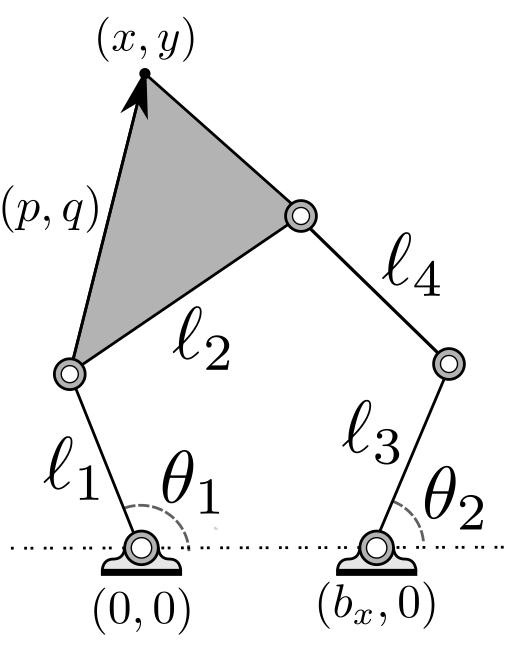}
    \caption{Illustration of a five-bar mechanism}
    \label{fig:FiveBar}
\end{figure}

Input modes of a five-bar mechanism, which is illustrated in Figure~\ref{fig:FiveBar} with more details provided in~\cite[Fig.~2]{OutputModeSwitching}, are the connected
components of the corresponding configuration space 
after removing the input singularities \cite{OutputModeSwitching}.  There are transmission
problems at input singularities causing a loss
of control authority of the end-effector.  
Hence, in \cite{OutputModeSwitching}, a margin around
the input singularities was avoided, which may be
for the application to provide a safety margin,
e.g., to accommodate manufacturing tolerances.  
Nonetheless, the following considers computing the actual
number of input modes without the safety margin.

Adapted from \cite{OutputModeSwitching}, the configuration space
is defined by $g = \{g_1,\dots,g_4\} = 0$ 
with parameters $(b_x,\ell_1,\ell_2,\ell_3,\ell_4,p,q)$
and variables $(x,y,c_1,s_1,c_2,s_2)$
where $c_j = \cos \theta_j$ and $s_j = \sin \theta_j$,
such that 
$$\begin{array}{rcl}
g_1 &=& x^2 + y^2 + \ell_1^2 - 2\ell_1(x c_1+y s_1) - p^2 - q^2, \\
g_2 &=& \ell_2^2 (x^2+y^2) + \ell_1^2 ((\ell_2-p)^2+q^2) + (bx^2 + \ell_3^2 - \ell_4^2)(p^2 + q^2) - 2b_x\ell_2(px + qy) 
\\
& &~-~2\ell_2\ell_3(p(xc_2+ys_2) - q(xs_2-yc_2))
+ 2\ell_1\ell_3((\ell_2p - p^2 - q^2)(c_1c_2 + s_1s_2)) \\
& &~+~2b_x\ell_3(p^2+q^2)c_2 + 2b_x\ell_1((\ell_2p - p^2 - q^2)c_1 + \ell_2qs_1) - 2\ell_1\ell_2\ell_3q(c_1s_2-s_1c_2) \\
& & ~+~2\ell_1\ell_2((p-\ell_2)(xc_1+ys_1)-q(xs_1-yc_1)), \\
g_3 &=& c_1^2 + s_1^2 - 1, \\
g_4 &=& c_2^2 + s_2^2 - 1.
\end{array}
$$
The input singularities are defined by
$$f = \det\left[\begin{array}{cc} \frac{\partial g_1}{\partial x} & \frac{\partial g_1}{\partial y} \\[0.03in]
\frac{\partial g_2}{\partial x} & \frac{\partial g_2}{\partial y} \end{array}\right] = 0.$$
For the parameters, corresponding with
\cite[Ex.~1]{OutputModeSwitching}, 
$$
\begin{array}{rclcrcl}
b_x &=& 0.19882665671846764, & ~~~~ & \ell_1 &=& 0.46540235567944005, \\
p &=& 0.048759206368821334,  & & \ell_2 &=& 0.3486213752206714, \\
q &=& 0.32778886030888477, & & \ell_3 &=& 0.24863642973175545, \\
& & & & \ell_4 &=& 0.4110712177344681,
\end{array}
$$
$X = V_\bR(g)$ is a smooth surface in $\bR^6$ 
and $f$ is a quadratic polynomial 
such that $\{g,f\}=0$ defines two irreducible curves
of degree $6$ in $\bC^6$. 
Using the routing function
$$r = \frac{f}{((x-x_0)^2+(y-y_0)^2+(c_1-c_{10})^2 + (s_1-s_{10})^2 + (c_2-c_{20})^2 + (s_2-s_{20})^2+1)^2}$$
where 
$$
\begin{array}{rclrclrcl}
x_0 &=& 0.919487917032162, & y_0 &=& -0.319228546667734,
& c_{10} &=& 0.170535501959555, \\
s_{10} &=& 0.502534118611306, & c_{20} &=& -0.552376121017726, & s_{20} &=& -0.489809769081462,
\end{array}
$$
were randomly selected, 
there are 8 routing points: four each with $f>0$ and $f<0$.
For $f>0$, there is a maximum and three saddles of index 1.  For $f<0$, there is a minimum and three
saddles of index 1.  Hence, Proposition~\ref{Prop:ConnectedCompRoutingPoints}
immediately yields that $X_r$ has two smoothly connected
components, one with $f>0$ and the other with $f<0$.
When partitioning with a ``thicker kerf,'' 
\cite{OutputModeSwitching} reports $6$ input modes 
with the comment that the ``counts generally do not match the
true number of regions'' due to the safety margin
around the input singularities.

\section{Conclusion}\label{sec:Conclusion}

By using gradient ascent/descent paths on a real algebraic variety,
algorithms were developed for computing the Euler characteristic,
counting the number of smoothly 
connected components, and performing
membership in a smoothly connected component.  
In particular, Algorithm~\ref{alg:Connectivity}
computes a representation of each smoothly
connected component consisting of
routing points and 
gradient ascent/descent paths
which can be used to decide membership
via Algorithm~\ref{alg:ConnectivityQ}.
Such algorithms
could naturally be extended to atomic semi-algebraic sets, with
an example presented for considering the intersection of a real
algebraic variety with the positive orthant.  

As constructed, the algorithms rely upon the ability to construct
a routing function, which relies upon a generic choice of a constant 
vector $c$.  An element of more concern when implementing such a theoretical
algorithm is is the proper tracking of gradient ascent/descent paths emanating
from unstable eigenvector directions.  Future work could be to investigate
using certified differential equation solvers for validating the 
numerically computed trajectories.

\section*{Acknowledgment}\label{sec:Ack}

J.C. and J.D.H. thank Liviu Nicolaescu for discussions involving Morse theory
and corresponding book~\cite{Liviu}.
J.D.H. was supported in part by National Science Foundation grant CCF~2331400
and the Robert and Sara Lumpkins Collegiate Professorship.
H.H. was supported in part by National Science Foundation grants CCF~2331401 and CCF~2212461. C.S. was supported by Simons grant 965262.

\section*{Competing interests}

The authors have no financial or proprietary interests in any material discussed in this article.

\bibliographystyle{abbrv}
\bibliography{ref}

\end{document}